\newtheorem{theorem}{Theorem}[section]
\newtheorem{lemma}[theorem]{Lemma}
\newtheorem{corollary}[theorem]{Corollary}
\newtheorem{proposition}[theorem]{Proposition}
\newtheorem{conjecture}[theorem]{Conjecture}
\newtheorem{theirtheorem}{Theorem}
\newcommand{\Z}{\mathbb Z}
\DeclareMathOperator{\ord}{ord}
\DeclareMathOperator{\supp}{Supp}
\newcommand{\la}{\langle}
\newcommand{\ra}{\rangle}
\newcommand{\be}{\begin{equation}}
\newcommand{\ee}{\end{equation}}
\newcommand{\und}{\;\mbox{ and }\;}
\newcommand{\ber}{\begin{eqnarray}}
\newcommand{\eer}{\end{eqnarray}}
\newcommand{\Sum}[2]{\underset{#1}{\overset{#2}{\sum}}}
\newcommand{\Summ}[1]{\underset{#1}{\sum}}
\newcommand{\wtilde}{\widetilde}
\newcommand{\Fc}{\mathcal F}
\newcommand{\vp}{\mathsf v}
\newcommand{\h}{\mathsf h}
\DeclareSymbolFont{goo}{OMS}{cmsy}{b}{n}
\DeclareMathSymbol{\gooT}{\mathalpha}{goo}{"1}
\newcommand{\bdot}{\mathbin{\gooT}}
\begin{document}

\title{A  Multiplicative Property for Zero-Sums II}
\author{David J. Grynkiewicz}
\address{Department of Mathematical Sciences\\ University of Memphis\\ Memphis, TN 38152\\
USA}
\email{diambri@hotmail.com}
\author{Chao Liu}

\address{Department of Mathematical Sciences\\ University of Memphis\\ Memphis, TN 38152\\
USA}
\email{chaoliuac@gmail.com}

\subjclass[2010]{11B75}
\keywords{Zero-Sum, Davenport Constant, Short Zero-Sum, Sequence Subsum}

\begin{abstract}For $n\geq 1$, let $C_n$ denote a cyclic group of order $n$.
Let $G=C_n\oplus C_{mn}$ with $n\geq 2$ and $m\geq 1$, and let $k\in [0,n-1]$. It is known that any sequence of $mn+n-1+k$ terms from $G$ must contain a nontrivial zero-sum of length at most $mn+n-1-k$. The associated inverse question is to characterize those  sequences with maximal length $mn+n-2+k$ that fail to contain a nontrivial zero-sum subsequence of length at most $mn+n-1-k$. For $k\leq 1$, this is the inverse question for the Davenport Constant. For $k=n-1$, this is the inverse question for the $\eta(G)$ invariant concerning short zero-sum subsequences. The structure in both these cases is known, and the structure for $k\in [2,n-2]$ when $m=1$ was studied previously with it  conjectured that they must have the form $S=e_1^{[n-1]}\boldsymbol{\cdot} e_2^{[n-1]}\boldsymbol{\cdot} (e_1+e_2)^{[k]}$ for some basis $(e_1,e_2)$, with the conjecture established in many cases.  In this paper,
we focus on the case $m\geq 2$. Assuming the conjectured structure holds for $k\in [2,n-2]$ in $C_n\oplus C_n$, we characterize the structure of all sequences of maximal length $mn+n-2+k$ in $C_n\oplus C_{mn}$ that fail to contain a nontrivial zero-sum of length at most $mn+n-1-k$, showing they must have either have the form $S=e_1^{[n-1]}\boldsymbol{\cdot} e_2^{[sn-1]}\boldsymbol{\cdot} (e_1+e_2)^{[(m-s)n+k]}$ for some $s\in [1,m]$ and basis $(e_1,e_2)$ with $\mathsf{ord}(e_2)=mn$, or else have the form $S=g_1^{[n-1]}\boldsymbol{\cdot} g_2^{[n-1]}\boldsymbol{\cdot} (g_1+g_2)^{[(m-1)n+k]}$ for some generating set $\{g_1,g_2\}$ with $\mathsf{ord}(g_1+g_2)=mn$.
In view of prior work, this reduces the structural characterization for a general rank two abelian group to the case $C_p\oplus C_p$ with $p$ prime.
Additionally, we give a new proof of the precise structure in the case $k=n-1$ for $m=1$. Combined with known results, our results unconditionally establish the  structure of extremal sequences  in $G=C_n\oplus C_{mn}$ in many cases, including when  $n$ is only divisible by primes at most $7$, when    $n\geq 2$ is a prime power and $k\leq \frac{2n+1}{3}$, or when  $n$ is composite and $k=n-d-1$ or $n-2d+1$ for a proper, nontrivial divisor $d\mid n$.
\end{abstract}

\maketitle

\section{Introduction and Preliminaries}

Regarding combinatorial notation for sequences and subsums, we utilize the standardized system surrounding multiplicative strings as outlined in the references \cite{Ger-Ruzsa-book} \cite{Alfredbook} \cite{Gbook}. For the reader new to this notational system, we begin with a self-contained review.

\subsection*{Notation} All intervals will be discrete, so for $x,\,y\in \Z$, we have $[x,y]=\{z\in \Z:\; x\leq z\leq y\}$.
For integers $x$ and $n$ with $n\geq 1$, let $(x)_n\in [0,n-1]$ denote the least non-negative representative for $x$ modulo $n$.
 We use $C_n$ to denote a cyclic group of order $n$. A finite abelian group $G$ has the form $G=C_{n_1}\oplus\ldots\oplus C_{n_r}$ with $1<n_1\mid \ldots\mid n_r$, where $n_r=\exp(G)$ is the \textbf{exponent} and $r\geq 0$ is the \textbf{rank} of $G$, which is the minimal cardinality of a generating set for $G$. For $r\leq 2$, an arbitrary rank at most two abelian group has the form $G=C_n\oplus C_{mn}$ with $n,m\geq 1$. When $n\geq 2$, so the rank $r=2$,
a (ordered) \textbf{basis} for $G$ is a pair $(e_1,e_2)$ of elements $e_1,e_2\in G$ such that $G=\la e_1\ra\oplus \la e_2\ra=C_n\oplus C_{mn}$.

Let $G$ be an abelian group. In the tradition of Combinatorial Number Theory, a sequence of terms from $G$ is a finite, unordered string of elements from $G$. We let $\Fc(G)$ denote the free abelian monoid with basis $G$, which consists of all (finite and unordered) sequences $S$ of terms from $G$ written as multiplicative strings using the boldsymbol $\bdot$ . This means a sequence $S\in \Fc(G)$ has the form  $$S=g_1\bdot\ldots\bdot g_\ell$$ with $g_1,\ldots,g_\ell\in G$ the terms in $S$.
Then $$\vp_g(S)=|\{i\in [1,\ell]:\; g_i=g\}|$$ denotes the multiplicity of the terms $g$ in $S$, allowing us to represent a sequence $S$ as $$S={\prod}^\bullet_{g\in G}g^{[\vp_g(S)]},$$ where $g^{[n]}={\underbrace{g\bdot\ldots\bdot g}}_n$ denotes a sequence consisting of the term $g\in G$ repeated $n\geq 0$ times.
 The maximum multiplicity of a term of $S$ is the height of the sequence, denoted $$\h(S)=\max\{\vp_g(S):\; g\in G\}.$$ The support of the sequence $S$ is the subset of all elements of $G$ that are contained in $S$, that is, that occur with positive multiplicity in $S$, which is denoted $$\supp(S)=\{g\in G:\; \vp_g(S)>0\}.$$
 The length of the sequence $S$ is $$|S|=\ell=\Summ{g\in G}\vp_g(S).$$  A sequence $T\in \Fc(G)$ with $\vp_g(T)\leq \vp_g(S)$ for all $g\in G$ is called a subsequence of $S$, denoted $T\mid S$, and in such case, $S\bdot T^{[-1]}=T^{[-1]}\bdot S$ denotes the subsequence of $S$ obtained by removing the terms of $T$ from $S$, so $\vp_g(S\bdot T^{[-1]})=\vp_g(S)-\vp_g(T)$ for all $g\in G$.

 Since the terms of $S$ lie in an abelian group, we have the following notation regarding subsums of terms from $S$. We let $$\sigma(S)=g_1+\ldots+g_\ell=\Summ{g\in G}\vp_g(S)g$$ denote the sum of the terms of $S$ and call $S$ a \textbf{zero-sum} sequence when $\sigma(S)=0$. A \textbf{minimal zero-sum} sequence is a zero-sum sequence that cannot have its terms partitioned into two proper, nontrivial zero-sum subsequences.  For $n\geq 0$, let
 \begin{align*}&\Sigma_n(S)=\{\sigma(T):\; T\mid S, \; |T|=n\},\quad
 \Sigma_{\leq n}(S)=\{\sigma(T):\; T\mid S, \; 1\leq |T|\leq n\},\quad\und\quad\\
 &\Sigma(S)=\{\sigma(T):\; T\mid S, \; |T|\geq 1\}
\end{align*}
 denote the variously restricted collections of  subsums of $S$.
 The sequence $S$ is \textbf{zero-sum free} if $0\notin \Sigma(S)$.
 Finally, if $\varphi:G\rightarrow G'$ is a map, then $$\varphi(S)=\varphi(g_1)\bdot\ldots\bdot \varphi(g_\ell)\in \Fc(G')$$ denotes the sequence of terms from $G'$ obtained by applying $\varphi$ to each term from $S$.

\subsection*{Background} Let $G$ be a finite abelian group.
The Davenport Constant for $G$ is the minimal  integer $\mathsf D(G)$ such that any sequence of terms from $G$ with length at least $\mathsf D(G)$ must contain a nontrivial zero-sum subsequence. Equivalently, $\mathsf D(G)$ is the maximal length of a minimal zero-sum sequence (see \cite{chaoI} \cite{Alfredbook}). Besides being of interest as an independent topic in Combinatorial Number Theory, it also plays an important role when studying factorization  in Krull Domains and, more generally, in (Transfer) Krull Monoids. See \cite{Alfredbook} \cite{Ger-Ruzsa-book}.
For a general rank at most two abelian group $G=C_n\oplus C_{mn}$, where  $m,\,n\geq 1$, we have \cite[Theorem 5.8.3]{Alfredbook} $$\mathsf D(C_n\oplus C_{mn})=mn+n-1.$$ This is a classical result of Olson  \cite{olson-rk2} or van Emde Boas and Kruyswijk \cite{Boas-K-rk2-Dav} whose proof requires the constant $\eta(G)$, defined as the  minimal length  such that any sequence of terms from $G$ with length at least $\eta(G)$ contains a nontrivial zero-sum subsequence of length at most $\exp(G)$.
For rank at most two groups, we have \cite{olson-rk2} \cite{Boas-K-rk2-Dav} \cite[Theorem 5.8.3]{Alfredbook} $$\eta(C_n\oplus C_{mn})=mn+2n-2.$$
Specializing a particular case of a more general invariant \cite{Cohen} \cite{Cong-zs}, Delorme, Ordaz and Quiroz introduced \cite{S_<k-invariant-origin-delorme-ordaz} the  constant $\mathsf s_{\leq \ell}(G)$ defined as the minimal length such that $$|S|\geq \mathsf s_{\leq \ell}(G)\quad \mbox{ implies }\quad 0\in \Sigma_{\leq \ell}(S).$$
For connections with Coding Theory, see \cite{Cohen}. The constant $\mathsf s_{\leq \ell}(G)$ has also been studied in various other contexts  \cite{freeze}
\cite{Roy-s<k} \cite{Gao-liu}.

Since  $\mathsf s_{\leq \ell}(G)=\infty$ for $\ell<\exp(G)$ and coincides with the invariants $\eta(G)$ and  $\mathsf D(G)$ for the values $\ell=\exp(G)$ and $\ell=\mathsf D(G)$, it may be viewed as a means of  interpolating these constants as $\ell\in [\exp(G),\mathsf D(G)]$.
For the case of rank two groups, Chulin Wang and Kevin Zhao determined its exact value \cite{kevin-S_<k-invariant}: \begin{align*}\mathsf s_{\leq mn+n-1-k}(C_{n}\oplus C_{mn})=mn+n-1+k,\quad  \mbox{ for $k\in [0,n-1]$.}\end{align*}
The associated inverse question is to  characterize all extremal sequences of maximal length
$mn+n-2+k$ with $0\notin \Sigma_{\leq mn+n-1-k}(S)$. For $k=0$, this means  characterizing all zero-sum free sequences of maximal length $mn+n-2=\mathsf D(G)-1$.
For $k=1$, this means  characterizing all minimal zero-sum sequences of maximal length $mn+n-1=\mathsf D(G)$.  For $k=n-1$, this means  characterizing all extremal sequences of length  $mn+2n-3=\eta(G)-1$ with $0\notin \Sigma_{\leq mn}(S)$.
The precise structure in all three of these cases is known and summarized in Conjecture \ref{conj-shortzs} below.

For $k\leq 1$, this  was an involved undertaking achieved by combining the individual results of Gao, Geroldinger, Grynkiewicz, Reiher and  Schmid from \cite{propBGaoGer-multof2} \cite{PropB} \cite{Schmid-propB} \cite{PropBfix} \cite{Reiher-propB}  with the numerical verification of the case when $m=1$ and $n=9$ \cite{PropB-smallcase}. This characterization has since proved quite useful,  being employed in the proofs of several other results, e.g., \cite{propB-app4}  \cite{PropB-app6} \cite{PropB-app7}  \cite{PropB-app2} \cite{propB-app1} \cite{schmid-propC-exact} \cite{PropB-app8}.

For $k=n-1$, this was accomplished by Schmid \cite{schmid-propC-exact}. For simplicity, we assume $m=1$ in the following discussion. The  group $G=C_n\oplus C_{n}$ has \textbf{Property C} if every sequence $S$ with $|S|=\eta(G)-1=3n-3$ and $0\notin \Sigma_{\leq n}(S)$  must have the form $S=e_1^{[n-1]}\bdot e_2^{[n-1]}\bdot e_3^{[n-1]}$. It was shown in \cite{PropB->PropC} that, assuming  Conjecture \ref{conj-shortzs} holds for $k=1$ in $G$ (meaning, assuming the structure of minimal zero-sums of length $2n-1$ were known), then Property C holds for $G$. Once this case in Conjecture \ref{conj-shortzs} was resolved (as described above), this meant Property C was established without condition. However, it was a surprisingly nontrivial question to determine which $e_1,e_2,e_3\in G$ would give rise to a sequence $S=e_1^{[n-1]}\bdot e_2^{[n-1]}\bdot e_3^{[n-1]}$ with $0\notin \Sigma_{\leq n}(S)$.
 For $n=p$ prime, a derivation of the precise characterization from Property C can be found in \cite{emde-propC-prime}, and the derivation of the precise characterization from Property C in the general case (when $n$ may be composite) follows as a particular case of a more general result of  Schmid \cite{schmid-propC-exact}. The exact formulation is stated in Conjecture \ref{conj-shortzs}.4.
In Section \ref{sec-k=n-1}, we give a short alternative proof of this case,  deriving the precise characterization given in Conjecture \ref{conj-shortzs}.4  from Property C using the arguments from \cite{uzi}.

The structure of sequences $S$ of terms from $C_n\oplus C_n$ with $|S|=2n-2+k$ but $0\notin \Sigma_{\leq 2n-1-k}(S)$ was studied in \cite{S_<k-invariant2/3p} \cite{chaoI} for $k\in [2,n-2]$. In \cite{S_<k-invariant2/3p}, the case when $n$ is prime and $k\leq  \frac{2n+1}{3}$ was resolved, showing all such sequences must have the form $$S=e_1^{[n-1]}\bdot e_2^{[n-1]}\bdot (e_1+e_2)^{[k]}$$ for some basis $(e_1,e_2)$ for $G=C_n\oplus C_n$.
It was conjectured in \cite{S_<k-invariant2/3p} \cite{kevin-S_<k-invariant} that this should also hold for $k\in [2,n-2]$, and the results of \cite{chaoI} extended this conjecture to general $n$. There, a multiplicative property for the conjecture was established, showing  that, if the conjectured structure holds for $k_m$ in $C_m\oplus C_m$ and for $k_n$ in $C_n\oplus C_n$, where $k_m\in [0,m-1]$ and $k_n\in [0,n-1]$, then the conjectured structure also holds for $k=k_mn+k_n$ in $C_{mn}\oplus C_{mn}$. This reduced the characterization problem in $C_n\oplus C_n$ to the case when $n$ is prime.

The characterization in the case $C_{n}\oplus C_{mn}$ with $m\geq 2$, even including a precise statement of the potential structure for sequences of length $mn+n-2+k$ avoiding a nontrivial zero-sum of length at most $mn+n-1-k$, was completely open. Towards this goal, we continue with the following conjecture, summarizing known cases and extending previous conjectures for $m=1$ to $m\geq 2$.
As discussed above,  Parts 1, 2 and 4 in Conjecture \ref{conj-shortzs} are known, and Part 3 holds for $m=1$ when $n=p$ is prime and $k\leq \frac{2p+1}{3}$.   In general, we say that Conjecture \ref{conj-shortzs} holds for $k$ in $C_n\oplus C_{mn}$ if Conjecture \ref{conj-shortzs} is true when $G=C_n\oplus C_{mn}$ for the given value $k\in [0,n-1]$.
If true, Conjecture \ref{conj-shortzs}
 would fully characterize the structure of all extremal sequences for the zero-sum invariant $\mathsf s_{\leq mn+n-1-k}(G)$ over a general rank two abelian group $G=C_n\oplus C_{mn}$.

\begin{conjecture}
\label{conj-shortzs} Let $n\geq 2$ and $m\geq 1$,  let $G=C_n\oplus C_{mn}$,  let $k\in [0,n-1]$, and let $S$ be a sequence of terms from $G$ with  $$|S|=mn+n-2+k\quad\und\quad 0\notin \Sigma_{\leq mn+n-1-k}(S).$$ Then there exists either  a basis  $(e_1,e_2)$ for $G$ with $\ord(e_2)=mn$ or  a generating set $\{g_1,g_2\}$ for $G$ with $\ord(g_1+g_2)=mn$ such that the following hold.
\begin{itemize}
\item[1.]  If $k=0$, then $S\bdot g$ satisfies one of the possibilities given in Item 2,  where $g=-\sigma(S)$.
\item[2.]  If $k=1$, then either
\begin{itemize}
\item[(a)] $S=e_1^{[n-1]}\bdot \prod_{i\in [1,mn]}^\bullet (x_ie_1+e_2)$, for some $x_1,\ldots,x_{mn}\in [0,n-1]$ with $x_1+\ldots+x_{mn}\equiv 1\mod n$,
\item[(b)] $S=e_2^{[mn-1]}\bdot \prod_{i\in [1,n]}^\bullet (x_ie_2+e_1)$, for some $x_1,\ldots,x_{n}\in [0,mn-1]$ with $x_1+\ldots+x_{n}\equiv 1\mod mn$,
\item[(c)] $S=g_1^{[sn-1]}\bdot (g_1+g_2)^{[(m-s)n+1]}\bdot \prod_{i\in [1,n-1]}^\bullet (-x_ig_1+g_2)$, for some $s\in [1,m-1]$ and $x_1,\ldots,x_{n-1}\in [-1,n-2]$ with $x_1+\ldots+x_{n-1}=0$ and $ng_2=0$, or
\item[(d)]  $S=g_1^{[n-1]}\bdot (g_1+g_2)^{[(m-1)n+1]}\bdot\prod_{i\in [1,n-1]}^\bullet (-x_ig_1+g_2)$, for some  $x_1,\ldots,x_{n-1}\in [-1,n-2]$ with $x_1+\ldots+x_{n-1}=0$,
\end{itemize}

\item[3.] If $k\in [2,n-2]$, then either
\begin{itemize}
\item[(a)]$S=e_1^{[n-1]}\bdot e_2^{[sn-1]}\bdot(e_1+e_2)^{[(m-s)n+k]}$, for some $s\in [1,m]$, or
\item[(b)] $S=g_1^{[n-1]}\bdot g_2^{[n-1]}\bdot(g_1+g_2)^{[(m-1)n+k]}$.
\end{itemize}
\item[4.] If $k=n-1$, then either
\begin{itemize}
\item[(a)] $S=e_1^{[n-1]}\bdot e_2^{[sn-1]}\bdot(xe_1+e_2)^{[(m-s)n+n-1]}$, for some $s\in [1,m]$ and $x\in [1,n-1]$ with $\gcd(x,n)=1$, or
\item[(b)] $S=g_1^{[n-1]}\bdot g_2^{[n-1]}\bdot (g_1+g_2)^{[(m-1)n+n-1]}$.
\end{itemize}
\end{itemize}
Moreover, if $m=1$, then $(a)$ holds in all the above parts.
\end{conjecture}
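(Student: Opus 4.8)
The plan is to focus on the one genuinely open regime, Part 3 with $m\geq 2$: Parts 1, 2 and 4 are quoted from the prior literature, Part 3 for $m=1$ is exactly the assumed square-case hypothesis in $C_n\oplus C_n$, and the final clause (that form $(a)$ always holds when $m=1$) is immediate, since $s\in[1,m]=\{1\}$ forces $(a)$ while any instance of $(b)$ in $C_n\oplus C_n$ may be rewritten with $g_1+g_2$ promoted to a basis element because $\ord(g_1+g_2)=n=\exp(G)$. So fix $G=C_n\oplus C_{mn}$ with $m\geq 2$, fix $k\in[2,n-2]$, and fix $S$ with $|S|=mn+n-2+k$ and $0\notin\Sigma_{\leq mn+n-1-k}(S)$; I would argue by induction on $m$, the base case $m=1$ being the hypothesis. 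The engine of the whole argument is that, by the evaluation $\mathsf s_{\leq mn+n-1-k}(G)=mn+n-1+k$ of \cite{kevin-S_<k-invariant}, the sequence $S$ is extremal — one term short of forcing a short zero-sum — and it is this tightness that I expect to propagate into rigidity.

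The first real step is to pass to a cyclic quotient. Let $\pi_1\colon G\to G/\la e_1\ra\cong C_{mn}$ for a rank-one direct summand $\la e_1\ra\cong C_n$, and study $\pi_1(S)\in\Fc(C_{mn})$, which has length $mn+n-2+k$, comfortably above $\mathsf D(C_{mn})=mn$. The constraint $0\notin\Sigma_{\leq mn+n-1-k}(S)$ restricts which short zero-sums $\pi_1(S)$ may carry, and I would feed this into the (completely understood) structure theory of zero-sum and zero-sum-free sequences over cyclic groups \cite{Gbook}: up to rescaling by a unit, all but $O(n)$ terms of $\pi_1(S)$ equal a single generator of $C_{mn}$. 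This isolates a distinguished term of $S$ of order $mn$ and fixes the ``long direction'' — the direction of the prospective $e_2$ in $(a)$ or of $g_1+g_2$ in $(b)$.

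The second step lifts this back to $G$. Having bulk-identified the dominant coset, I would remove $n$ copies of the distinguished order-$mn$ term to reach the extremal length $(m-1)n+n-2+k$ and relate the result to an extremal sequence for the parameter $m-1$, applying the inductive hypothesis. Complementarily, the projection $\pi_2\colon G\to G/\la ne_2\ra\cong C_n\oplus C_n$ gives access to the assumed square-case structure of Conjecture \ref{conj-shortzs}.3 once the length is cut down to the square-case extremal value $2n-2+k$; matching the three-element supports produced by $\pi_1$ and by $\pi_2$ forces $\supp(S)$ to consist of exactly the three prescribed elements, and the multiplicity bookkeeping then fixes $s\in[1,m]$. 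The dichotomy between $(a)$ and $(b)$ records whether the distinguished order-$mn$ element is one of the two height-$(n-1)$ terms (a basis, $\ord(e_2)=mn$, case $(a)$) or is their sum (only a generating set, $\ord(g_1+g_2)=mn$, case $(b)$).

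The hard part will be the gluing, together with making the induction on $m$ rigorous. Since $C_n\oplus C_{(m-1)n}$ is neither a subgroup nor a quotient of $G=C_n\oplus C_{mn}$, relating the length-reduced sequence to a genuine extremal sequence for $m-1$ is delicate and is where I expect the crux to lie. Beyond this, reconciling the two quotient pictures — ensuring the basis or generating set extracted via $\pi_1$ is compatible with the one forced by $\pi_2$, that no low-multiplicity impurity terms survive the lift, and, crucially, that the reconstructed three-element sequence avoids every zero-sum of length up to $mn+n-1-k$ and not merely the short zero-sums visible after projection — will require careful casework, especially at the boundary values $s=1$ and $s=m$ where $(a)$ degenerates or meets $(b)$.
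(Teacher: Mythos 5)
Your overall framing is right --- the only content to prove is Part~3 for $m\geq 2$ assuming the $C_n\oplus C_n$ case --- but both pillars of your argument have genuine gaps, and the second is fatal as stated. First, the cyclic quotient $\pi_1\colon G\to C_{mn}$ is the wrong projection to lean on: the hypothesis $0\notin\Sigma_{\leq mn+n-1-k}(S)$ says nothing direct about zero-sum subsequences of $\pi_1(S)$, since a subsequence $T\mid S$ with $\pi_1(T)$ zero-sum merely has $\sigma(T)\in\la e_1\ra$, and $\pi_1(S)$, of length far above $\mathsf D(C_{mn})=mn$, is awash in zero-sums. No inverse theorem over cyclic groups applies to $\pi_1(S)$ as such, so your claim that ``all but $O(n)$ terms of $\pi_1(S)$ equal a single generator'' has no visible derivation. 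Second, and more seriously, the induction on $m$ does not get off the ground: deleting $n$ terms from $S$ does produce a sequence of the right length $(m-1)n+n-2+k$ that still avoids the relevant short zero-sums, but it is a sequence over $C_n\oplus C_{mn}$, not over $C_n\oplus C_{(m-1)n}$, which (as you note) is neither a subgroup nor a quotient of $G$; the inductive hypothesis therefore does not apply to it. You flag this as ``delicate''; it is in fact the entire problem, and no mechanism for crossing it is proposed.

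The paper avoids both issues with a single device your outline is missing. Take $\varphi\colon G\to G$ with $\varphi(G)\cong C_n\oplus C_n$ and $\ker\varphi\cong C_m$, and apply $\mathsf s_{\leq n}(C_n\oplus C_n)=3n-2$ repeatedly to factor $S=W\bdot W_1\bdot\ldots\bdot W_{m-1}$ with each $\varphi(W_i)$ a zero-sum of length at most $n$. The hypothesis on $S$ then forces $|W_i|=n$, $|W|=2n-2+k$ and $0\notin\Sigma_{\leq 2n-1-k}(\varphi(W))$, since otherwise $\mathsf D(C_m)=m$ applied to the block-sums in $\ker\varphi$ manufactures a forbidden zero-sum in $S$; hence the assumed square case applies to $\varphi(W)$ directly, with no induction on $m$. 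The ``long direction'' you hoped to extract from $\pi_1$ instead falls out of Theorem~\ref{thm-cyclic-char} applied in $\ker\varphi\cong C_m$: the associated sequence $\sigma(W_0)\bdot\ldots\bdot\sigma(W_{m-1})$ must be a minimal zero-sum of maximal length $m$, hence constant equal to some $g_0$ of order $m$, and this one rigidity statement, combined with term-swapping between blocks (Claims B--D of the paper), pins down $\supp(S)$, forces $\ord(g_1+g_2)=mn$, and produces the dichotomy between (a) and (b). If you want to salvage your outline, replace the cyclic projection and the induction by this block decomposition.
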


The main goal of this paper is Theorem \ref{thm-mult-offdiag},  which shows that the structural description given in Conjecture  \ref{conj-shortzs}.3 is multiplicative in the sense that, if it holds for $k$ in $C_n\oplus C_n$, then it holds for $k$ in $C_n\oplus C_{mn}$. This reduces the characterization in a general rank two abelian group to the case $C_n\oplus C_n$, which in turn is reduced to the case $C_p\oplus C_p$ with $p\geq 11$ prime by the results of \cite{chaoI}.
The reduction to the case $C_n\oplus C_n$ is the main aim of the paper and emulates the strategy successfully used to characterize the extremal sequences for the Davenport Constant (the case $k\leq 1$), where the characterization problem was reduced to case $C_n\oplus C_n$ by Schmid \cite{Schmid-propB} and resolved in this case by the results from  \cite{propBGaoGer-multof2} \cite{PropB} \cite{PropBfix} \cite{Reiher-propB} (as well as the case $n=9$ \cite{PropB-smallcase}).

\begin{theorem}\label{thm-mult-offdiag}
Let $m,\,n\geq 2$. If Conjecture \ref{conj-shortzs} holds for $k\in [0,n-1]$ in $C_n\oplus C_{n}$, then Conjecture \ref{conj-shortzs} hold for $k$ in $C_n\oplus C_{mn}$.
\end{theorem}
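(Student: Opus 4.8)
The plan is to pass to the quotient $\bar G := G/nG$, mimicking Schmid's reduction for the Davenport constant. The first point is that this quotient is canonical: a subgroup $H\le G$ satisfies $G/H\cong C_n\oplus C_n$ if and only if $nG\subseteq H$ and $|H|=|G|/n^2=m$, and since $nG$ itself has order $m$, this forces $H=nG$. Thus $\bar G=G/nG\cong C_n\oplus C_n$ is the unique such quotient; write $\phi\colon G\to\bar G$ for the projection. The whole strategy rests on the dictionary: if $\bar T\mid\phi(S)$ is a zero-sum in $\bar G$, then its preimage $T\mid S$ satisfies $\sigma(T)\in nG=H\cong C_m$. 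We use only classical values on the $\bar G$ side — $\eta(\bar G)=3n-2$, $\mathsf{s}_{\le 2n-1-k}(\bar G)=2n-1+k$, and $\mathsf{D}(C_m)=m$ — together with the assumed Conjecture for $k$ in $C_n\oplus C_n$.

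The combinatorial heart is an extraction step. Since $|\phi(S)|=mn+n-2+k$, before the $j$-th removal (for $j\le m-1$) the remaining length is at least $(mn+n-2+k)-(m-2)n=3n-2+k\ge\eta(\bar G)$, so we may greedily peel off pairwise disjoint zero-sums $\bar T_1,\ldots,\bar T_{m-1}\mid\phi(S)$ with each $|\bar T_i|\le n$. Let $R$ be the remainder, and let $s_i:=\sigma(T_i)\in H$ where $T_i\mid S$ lifts $\bar T_i$. I first claim $0\notin\Sigma_{\le 2n-1-k}(R)$: a hypothetical zero-sum $\bar U\mid R$ of length $\le 2n-1-k$ lifts to $U\mid S$, disjoint from the $T_i$, with $\sigma(U)\in H$; the $m$ elements $s_1,\ldots,s_{m-1},\sigma(U)$ of $H\cong C_m$ admit, by $\mathsf{D}(C_m)=m$, a nonempty subfamily summing to $0$, whose union is a nonempty zero-sum of $S$ of length at most $(2n-1-k)+(m-1)n=mn+n-1-k$ — which is forbidden. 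Now $\mathsf{s}_{\le 2n-1-k}(\bar G)=2n-1+k$ forces $|R|\le 2n-2+k$, while $|R|=(mn+n-2+k)-\sum_i|\bar T_i|\ge 2n-2+k$; hence equality holds throughout, so $|R|=2n-2+k$ and every $|\bar T_i|=n$. The same exchange argument shows $s_1\bdot\cdots\bdot s_{m-1}$ is zero-sum free in $C_m$, hence of maximal length $\mathsf{D}(C_m)-1$, so all $s_i$ equal one fixed generator $g$ of $H$.

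With $R$ pinned to the extremal length $2n-2+k$ and satisfying $0\notin\Sigma_{\le 2n-1-k}(R)$, I apply the assumed Conjecture for $k$ in $C_n\oplus C_n$ (whose $m=1$ clause forces form (a)), giving $R=\bar f_1^{[n-1]}\bdot\bar f_2^{[n-1]}\bdot(\bar f_1+\bar f_2)^{[k]}$ for a basis $(\bar f_1,\bar f_2)$ of $\bar G$, with the evident variants for $k\in\{0,1,n-1\}$. The crucial observation is that this conclusion holds for \emph{every} choice of $m-1$ disjoint length-$\le n$ zero-sums; so by rerouting the extraction around a bounded set of prescribed terms one forces $|\supp(\phi(S))|\le 3$, whence $\supp(\phi(S))=\{\bar f_1,\bar f_2,\bar f_1+\bar f_2\}$. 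A direct check then shows the only length-$n$ zero-sums supported on these three elements are $\bar f_1^{[n]}$, $\bar f_2^{[n]}$ and $(\bar f_1+\bar f_2)^{[n]}$, which pins the multiplicities to $\phi(S)=\bar f_1^{[pn+n-1]}\bdot\bar f_2^{[qn+n-1]}\bdot(\bar f_1+\bar f_2)^{[rn+k]}$ for some $p,q,r\ge 0$ with $p+q+r=m-1$.

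It remains to lift back to $S$. Each fiber over $\bar f_1,\bar f_2,\bar f_1+\bar f_2$ lies in a single $H$-coset, and the key claim is that each fiber is \emph{constant}: were two distinct lifts present in a fiber of size $\ge n$, swapping one term would produce two length-$n$ zero-sums mod $H$ whose $H$-sums differ, and playing these against the common generator $g$ yields a family whose sum-sequence in $C_m$ is no longer constant, contradicting the maximal zero-sum-free structure forced above. Constancy gives $S=u_1^{[A]}\bdot u_2^{[B]}\bdot u_3^{[C]}$ with $\bar u_1=\bar f_1$, $\bar u_2=\bar f_2$, $\bar u_3=\bar f_1+\bar f_2$ and $(A,B,C)=(pn+n-1,\,qn+n-1,\,rn+k)$; the residual ambiguity $u_1+u_2-u_3\in H$, together with the orders $\ord(u_i)$ computed from $s_i=g$ and $p,q,r$, then selects either form (a) — a genuine basis $(e_1,e_2)$ with $\ord(e_2)=mn$ and $s:=q+1$ — or form (b) — a generating set with $\ord(g_1+g_2)=mn$. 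I expect this final stage, namely proving fiber constancy and disentangling cases (a) and (b) with the correct order conditions, to be the main obstacle, since it is precisely where the extension $C_{mn}$ over $C_n$ (encoded by $H$ and the generator $g$) genuinely enters; the boundary values $k\in\{0,1\}$ and $k=n-1$ run in parallel, invoking the corresponding known forms of the Conjecture in $C_n\oplus C_n$ in place of form (a).
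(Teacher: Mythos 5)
Your overall route is the same as the paper's: the same quotient $G/nG\cong C_n\oplus C_n$, the same greedy extraction of $m-1$ disjoint zero-sum blocks, the same use of $\mathsf D(C_m)$ and the cyclic inverse theorem to force the block sums to equal one fixed generator of $H$, and the same application of the assumed conjecture to the length-$(2n-2+k)$ remainder. Up to the identification $R=\bar f_1^{[n-1]}\bdot\bar f_2^{[n-1]}\bdot(\bar f_1+\bar f_2)^{[k]}$ your argument is complete and matches the paper step for step.

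The genuine gap is the sentence ``by rerouting the extraction around a bounded set of prescribed terms one forces $|\supp(\phi(S))|\le 3$.'' Rerouting only shows that every term of $\phi(S)$ lies in the support of \emph{some} remainder, and each remainder has the form $(\bar f_1')^{[n-1]}\bdot(\bar f_2')^{[n-1]}\bdot(\bar f_1'+\bar f_2')^{[k]}$ for a basis that a priori depends on the chosen decomposition; to get $|\supp(\phi(S))|\le 3$ you must prove all these bases coincide. That is the paper's Claim B, established by a real case analysis (counting which elements can have multiplicity at least $n-1$ in $\varphi(W\bdot W_1)$ and eliminating $\bar f_1=\bar e_1+\bar e_2$ and $\bar f_1=\bar e_1-\bar e_2$ using $k\in[2,n-2]$ and $n\ge 4$), and nothing in your sketch supplies it. A second, smaller gap: your framework only ever has $m-1$ blocks plus the remainder, so the block-sum sequence has length $m-1$ and yields no constraint on any sub-block of $R$. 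The paper additionally carves out $W_0\mid W$ with $\varphi(W_0)=\bar e_1^{[n-k]}\bdot\bar e_2^{[n-k]}\bdot(\bar e_1+\bar e_2)^{[k]}$ to form an $m$-block ``weak'' decomposition; the resulting relation $\sigma(W_0)=g_0$, i.e.\ $ng_1+ng_2=g_0$, together with the $1$-for-$2$ swap $z\leftrightarrow g_1\bdot g_2$, is exactly what eliminates your ``residual ambiguity $u_1+u_2-u_3\in H$'' and drives the order computations separating cases (a) and (b). You correctly flag the final stage as nontrivial, but the support-restriction step, which you treat as routine, is where the paper does most of its work.
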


The reduction to the diagonal case $C_n\oplus C_n$ is our main goal. However, combining Theorem \ref{thm-mult-offdiag} with known cases in Conjecture \ref{conj-shortzs} yields many group $C_{n}\oplus C_{mn}$ where the structure of extremal sequences is determined here without restriction. As several examples, we list the following corollaries.

\begin{corollary}\label{cor-mult-2,3}
 If $m\geq 1$ and $n=2^{s_1}3^{s_2}5^{s_3}7^{s_4}\geq 2$ with $s_1,s_2,s_3,s_4\geq 0$, then Conjecture \ref{conj-shortzs} holds  in $C_{n}\oplus C_{mn}$ for all  $k\in [0,n-1]$.
\end{corollary}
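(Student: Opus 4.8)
The plan is to reduce the corollary, via two multiplicative reductions, to a finite numerical check over the primes $p\in\{2,3,5,7\}$, after which essentially no analytic work remains. First I would invoke Theorem \ref{thm-mult-offdiag} to pass from the off-diagonal group to the diagonal one: for $m\geq 2$ the theorem shows that, once Conjecture \ref{conj-shortzs} is known for all $k\in[0,n-1]$ in $C_n\oplus C_n$, it follows for all $k$ in $C_n\oplus C_{mn}$; the case $m=1$ is literally the diagonal group and so needs no reduction. Thus it suffices to establish the full conjecture, for every $k\in[0,n-1]$, in $C_n\oplus C_n$ with $n=2^{s_1}3^{s_2}5^{s_3}7^{s_4}$. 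For this I would use the multiplicative property of \cite{chaoI}: if the conjecture holds for all $k\in[0,a-1]$ in $C_a\oplus C_a$ and for all $k\in[0,b-1]$ in $C_b\oplus C_b$, then, writing each $k\in[0,ab-1]$ uniquely as $k=k_a b+k_b$ with $k_a\in[0,a-1]$ and $k_b\in[0,b-1]$, it holds for all $k\in[0,ab-1]$ in $C_{ab}\oplus C_{ab}$. Iterating this over the prime factorization of $n$, the diagonal case reduces to establishing the full conjecture, for all $k\in[0,p-1]$, in each $C_p\oplus C_p$ with $p\in\{2,3,5,7\}$.

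Next I would dispose of these prime base cases. For each such $p$, Parts 1 and 2 (the values $k\in\{0,1\}$) and Part 4 (the value $k=p-1$) are already known without restriction, as recorded in the Background, so only Part 3, covering $k\in[2,p-2]$, is in question. When $p\in\{2,3\}$ the interval $[2,p-2]$ is empty and Part 3 is vacuously true. When $p\in\{5,7\}$, Part 3 is established in \cite{S_<k-invariant2/3p} exactly in the range $k\leq\frac{2p+1}{3}$, so the base case comes down to the inclusion $[2,p-2]\subseteq\bigl[2,\tfrac{2p+1}{3}\bigr]$.

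This inclusion is the genuine crux, and it is precisely what fixes the bound $7$ in the statement. The inequality $p-2\leq\frac{2p+1}{3}$ is equivalent, upon clearing denominators, to $3p-6\leq 2p+1$, that is, to $p\leq 7$. Hence for every prime $p\leq 7$ the entire Part 3 range $[2,p-2]$ falls within the range $k\leq\frac{2p+1}{3}$ handled in \cite{S_<k-invariant2/3p}: for $p=5$ the range $\{2,3\}$ lies below $\frac{11}{3}$, and for $p=7$ the range $\{2,3,4,5\}$ lies at or below $\frac{15}{3}=5$. With the base cases settled, the two reductions above deliver Conjecture \ref{conj-shortzs} for all $k\in[0,n-1]$ in $C_n\oplus C_{mn}$, as claimed.

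I do not expect a substantive obstacle: all the depth resides in Theorem \ref{thm-mult-offdiag} and in the cited diagonal and prime results, while the corollary itself is an exercise in combining them. The only points requiring care are the range bookkeeping under the multiplicative property---verifying that $k=k_a b+k_b$ really does enumerate $[0,ab-1]$ bijectively---and the elementary threshold $p\leq 7$. It is worth recording that this threshold is sharp: for $p=11$ the top values $k\in\{8,9\}$ of $[2,9]$ exceed $\frac{2\cdot 11+1}{3}=\frac{23}{3}$, so the prime case is not yet available and $11$ must indeed be excluded from the statement.
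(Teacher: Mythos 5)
Your argument is correct and takes essentially the same route as the paper: the paper's proof is a one-line combination of Theorem \ref{thm-mult-offdiag} with \cite[Corollary 1.3]{chaoI}, which is exactly the diagonal-case statement you rederive by iterating the multiplicative property of \cite{chaoI} down to the primes $p\in\{2,3,5,7\}$ and invoking the known cases $k\in\{0,1,p-1\}$ together with the range $k\leq \frac{2p+1}{3}$ from \cite{S_<k-invariant2/3p}. Your bookkeeping is accurate, including the bijectivity of $k=k_ab+k_b$ and the observation that $p-2\leq\frac{2p+1}{3}$ precisely when $p\leq 7$, which is what makes the threshold in the statement sharp.
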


\begin{corollary}\label{cor-mult}
For any prime power $n\geq 2$ and $m\geq 1$, Conjecture \ref{conj-shortzs} holds  in $C_{n}\oplus C_{mn}$ for all  $k\leq \frac{2n+1}{3}$.
\end{corollary}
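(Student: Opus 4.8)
The plan is to reduce everything to the diagonal group $C_n\oplus C_n$ and then transport the conclusion to $C_n\oplus C_{mn}$ by a single application of Theorem \ref{thm-mult-offdiag}. Fix a prime power $n=p^a$ and a value $k\leq \frac{2n+1}{3}$. Since Theorem \ref{thm-mult-offdiag} asserts that validity of Conjecture \ref{conj-shortzs} for $k$ in $C_n\oplus C_n$ already forces its validity for $k$ in $C_n\oplus C_{mn}$, the whole corollary collapses to the diagonal case: for $m\geq 2$ it is the output of the theorem, while for $m=1$ the group $C_n\oplus C_{mn}$ is $C_n\oplus C_n$ itself, so nothing more is needed. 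Thus all the substance lies in confirming Conjecture \ref{conj-shortzs} for $k\leq \frac{2n+1}{3}$ in $C_n\oplus C_n$.

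First I would clear the boundary values of $k$. For $k=0$ and $k=1$ the structures described in Parts 1 and 2 are known without restriction, and the value $k=n-1$ — which lies in the range $k\leq \frac{2n+1}{3}$ only for the small prime powers $n\leq 4$, since $n-1\leq \frac{2n+1}{3}$ forces $n\leq 4$ — is covered unconditionally by Part 4 (with $n=2$ falling under Part 2). What remains is the interior range $k\in[2,\frac{2n+1}{3}]$, where the assertion is precisely Part 3: every extremal $S$ must take the form $e_1^{[n-1]}\bdot e_2^{[n-1]}\bdot (e_1+e_2)^{[k]}$.

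For this interior range the diagonal prime-power statement is the crux. When $a=1$, so $n=p$, it is exactly the theorem of \cite{S_<k-invariant2/3p}. For $a\geq 2$ one is tempted to feed the prime case into the multiplicative property of \cite{chaoI}, writing $n=p\cdot p^{a-1}$ and combining known instances over $C_p\oplus C_p$ and $C_{p^{a-1}}\oplus C_{p^{a-1}}$, proceeding by induction on $a$; the boundary Parts 1, 2 and 4 then play a genuine role, since they are what allow the extremal components of a decomposition to be absorbed, while Part 3 supplies the interior components. The arithmetic core of the step is to certify that every $k\leq \frac{2n+1}{3}$ is realized by a factorization all of whose factors fall in a range where the diagonal conjecture is already established.

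The hard part will be exactly this certification, and I suspect the multiplicative property alone does not suffice. Driven only by the prime threshold, the combination transmits the bound $\frac{2p+1}{3}$ rather than the larger $\frac{2n+1}{3}=\frac{2p^a+1}{3}$ that the corollary demands: for $k\leq \frac{2p+1}{3}$ the factorization stays inside the known ranges, but once $k$ exceeds $\frac{2p+1}{3}$ some base-$p$ digit of $k$ can fall into the band $[\floor{\frac{2p+1}{3}}+1,\,p-2]$, still open for the diagonal conjecture over $C_p\oplus C_p$ when $p\geq 11$. Closing this gap, and thereby reaching the full interval $k\leq \frac{2p^a+1}{3}$, is what I expect to force Part 3 to be proved directly in $C_{p^a}\oplus C_{p^a}$ — via the structural analysis of long extremal sequences over the cyclic group $\Z_{p^a}$ that underlies the prime case of \cite{S_<k-invariant2/3p} — rather than purely multiplicatively. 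This is also where the hypothesis $n=p^a$ is essential: for a single prime the $\tfrac{2}{3}$-threshold is the only constraint, whereas for composite $n$ with several prime divisors one is confined to the mixed-radix ranges recorded in Corollary \ref{cor-mult-2,3}. Once the diagonal case is secured on the whole interval $k\leq \frac{2n+1}{3}$, a single application of Theorem \ref{thm-mult-offdiag} finishes the argument for every $m\geq 2$, with $m=1$ requiring nothing further.
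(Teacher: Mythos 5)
Your reduction to the diagonal group is exactly the paper's route: Theorem \ref{thm-mult-offdiag} handles $m\geq 2$, the case $m=1$ \emph{is} the diagonal case, and everything therefore rests on Conjecture \ref{conj-shortzs} holding for all $k\leq\frac{2n+1}{3}$ in $C_n\oplus C_n$ with $n$ a prime power. The paper supplies that input by citation: it invokes \cite[Corollary 1.4]{chaoI}, which asserts precisely the diagonal prime-power statement on the full range $k\leq\frac{2n+1}{3}$, and the proof of Corollary \ref{cor-mult} is then the one-line combination of that result with Theorem \ref{thm-mult-offdiag}.

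The gap is that you never actually establish this diagonal input. Your diagnosis of why the naive route fails is correct and worth crediting: writing $n=p^a$ and feeding the prime case of \cite{S_<k-invariant2/3p} into the multiplicative property of \cite{chaoI} (holds for $k_m$ in $C_m\oplus C_m$ and for $k_n$ in $C_n\oplus C_n$ implies holds for $k_mn+k_n$ in $C_{mn}\oplus C_{mn}$) only reaches those $k$ whose base-$p$ digits all avoid the open band above $\frac{2p+1}{3}$, not the full interval $\left[0,\frac{2p^a+1}{3}\right]$; this is exactly why Corollary \ref{cor-mult-2,3} is restricted to primes at most $7$, where the band is empty. But having identified the obstruction, you resolve it only by saying you ``expect'' Part 3 to be provable directly in $C_{p^a}\oplus C_{p^a}$ by the methods underlying the prime case. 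That is a statement of intent, not an argument; as written, the crux of the corollary --- the interior range $k\in[2,\frac{2n+1}{3}]$ for $n=p^a$ with $a\geq 2$ --- is left unproved. To close the proposal you would need either to cite \cite[Corollary 1.4]{chaoI}, whose proof is where the work beyond the bare multiplicative property lives, or to reproduce that argument. Your handling of $m=1$ and of the boundary values $k\in\{0,1\}$ and $k=n-1$ is correct (and the observation that $n-1\leq\frac{2n+1}{3}$ forces $n\leq 4$ is accurate, though immaterial since those parts are known unconditionally for every $n$).
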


\begin{corollary}\label{cor-bigspec}
For $n\geq 4$ composite, $m\geq 1$ and $d\mid n$ a proper, nontrivial divisor,  Conjecture \ref{conj-shortzs} holds for $k=n-d-1$ and  for $k=n-2d+1$  in $C_{n}\oplus C_{mn}$.
\end{corollary}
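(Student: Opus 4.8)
The plan is to peel the problem down to the diagonal group $C_n\oplus C_n$ using Theorem \ref{thm-mult-offdiag}, and then to resolve the two resulting diagonal values via the multiplicative property for $C_n\oplus C_n$ from \cite{chaoI} together with the already-established base cases of Conjecture \ref{conj-shortzs}. Throughout, fix the proper, nontrivial divisor $d\mid n$ and write $n=de$ with $e=n/d$, so that $d,e\geq 2$.

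First I would separate the two regimes for $m$. For $m=1$ the assertion is purely a statement about $C_n\oplus C_n$, so nothing needs to be reduced. For $m\geq 2$, Theorem \ref{thm-mult-offdiag} guarantees that Conjecture \ref{conj-shortzs} for a value $k$ in $C_n\oplus C_{mn}$ follows once it is known for that same $k$ in $C_n\oplus C_n$. Hence, for both targets $k=n-d-1$ and $k=n-2d+1$, it suffices to verify Conjecture \ref{conj-shortzs} for $k$ in the diagonal group $C_n\oplus C_n$; this single verification simultaneously settles $m=1$ and supplies the hypothesis needed to invoke Theorem \ref{thm-mult-offdiag} for every $m\geq 2$. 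The reduction is uniform across the parts of Conjecture \ref{conj-shortzs}, so I need not track which part the value $k$ falls into.

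For the diagonal verification I would apply the multiplicative property of \cite{chaoI} to the factorization $n=e\cdot d$, using the identities
\[
n-d-1=(e-2)\,d+(d-1)\qquad\text{and}\qquad n-2d+1=(e-2)\,d+1 .
\]
Each right-hand side has the shape $k_e\,d+k_d$ with $k_e=e-2\in[0,e-1]$ and $k_d\in\{\,d-1,\,1\,\}\subseteq[0,d-1]$, which is exactly the form to which the multiplicative property applies. The two factors $k_d=d-1$ and $k_d=1$ are, respectively, the $\eta$-case (Conjecture \ref{conj-shortzs}.4) and the minimal-zero-sum case (Conjecture \ref{conj-shortzs}.2) for $C_d\oplus C_d$, both of which hold unconditionally. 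Thus both diagonal targets are reduced to the single near-extremal value $k=e-2$ in $C_e\oplus C_e$.

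It remains to dispose of $k=e-2$ in $C_e\oplus C_e$, and this is the step carrying the real content. When $e=2$ it is the zero-sum-free case $k=0$, already known. For $e\geq 3$, I would apply the multiplicative property once more, splitting off a prime factor $p\mid e$: writing $e=pf$ gives $e-2=(f-1)\,p+(p-2)$, so $k=e-2$ follows from the $\eta$-case $k=f-1$ for $C_f\oplus C_f$ (again Conjecture \ref{conj-shortzs}.4, known for every $f$) together with the near-$\eta$ value $k=p-2$ for the prime $p$. Consequently, the entire corollary rests on Conjecture \ref{conj-shortzs} holding for $k=p-2$ in $C_p\oplus C_p$ for primes $p$. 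This near-$\eta$ prime case—which lies just beyond the range $k\leq\tfrac{2p+1}{3}$ handled in \cite{S_<k-invariant2/3p} and just below the $\eta$-case—is forced, since a prime factor such as $p=11$ (arising e.g. from $n=22$, $d=2$) admits no further multiplicative splitting; verifying that it is genuinely available from the diagonal input of \cite{chaoI}, rather than a gap, is the main obstacle. Once it is in hand, the chain of multiplicative reductions above closes and the corollary follows.
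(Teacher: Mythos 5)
Your opening move is the right one and is exactly what the paper does: Theorem \ref{thm-mult-offdiag} reduces both values $k=n-d-1$ and $k=n-2d+1$ in $C_n\oplus C_{mn}$ to the same values in the diagonal group $C_n\oplus C_n$, with the $m=1$ case being that diagonal statement itself. Where you diverge is in how the diagonal case is settled. The paper simply invokes \cite[Corollary~1.5]{chaoI}, which is precisely the $m=1$ instance of the present corollary; you instead try to rebuild it from the multiplicative property of \cite{chaoI} together with the publicly listed base cases ($k\in\{0,1,n-1\}$ for every $n$, and $k\leq\frac{2p+1}{3}$ for $p$ prime). Your arithmetic is fine --- $n-d-1=(e-2)d+(d-1)$ and $n-2d+1=(e-2)d+1$ with $e=n/d$, and $e-2=(f-1)p+(p-2)$ for $e=pf$ --- but the chain genuinely does not close.

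The gap you flag at the end is not a formality; it is fatal to this route. Take $n=22$ and $d=2$, so $k=n-d-1=19$ in $C_{22}\oplus C_{22}$. The only factorizations available give $19=1\cdot 11+8$ (needing $k=8$ in $C_{11}\oplus C_{11}$) or $19=9\cdot 2+1$ (needing $k=9$ in $C_{11}\oplus C_{11}$), and both $8$ and $9$ exceed $\frac{2\cdot 11+1}{3}=\frac{23}{3}$, so neither is covered by \cite{S_<k-invariant2/3p}; indeed $C_{11}\oplus C_{11}$ is exactly the open bottleneck the introduction points to. So the diagonal statement for these $k$ cannot be a consequence of the multiplicative recursion plus the listed base cases alone --- \cite[Corollary~1.5]{chaoI} must rest on an additional argument in that paper handling these particular near-extremal values directly. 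Your proof is therefore incomplete as written: either cite \cite[Corollary~1.5]{chaoI} for the diagonal case (after which your first paragraph finishes the job), or supply an independent proof of Conjecture \ref{conj-shortzs} for $k=p-2$ in $C_p\oplus C_p$, which is a substantial open-looking problem rather than a verification.
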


\section{The case $k=n-1$}\label{sec-k=n-1}

As noted in the introduction, Conjecture \ref{conj-shortzs} holding for $k=1$ in $G=C_n\oplus C_n$ implies that Property C holds for $G$, meaning any sequence $S$ of $3n-3$ terms from $G=C_n\oplus C_n$ with $0\notin\Sigma_{\leq n}(S)$ must have the form  $S=e_1^{[n-1]}\bdot e_2^{[n-1]}\bdot e_3^{[n-1]}$ for some distinct $e_1,e_2,e_3\in G$.
The goal of this section is to give a new proof  of the  characterization of which elements $e_1,e_2,e_3\in G$ result in  a sequence $S=e_1^{[n-1]}\bdot e_2^{[n-1]}\bdot e_3^{[n-1]}$ with $0\notin \Sigma_{\leq n}(S)$.
Clearly, $0\notin \Sigma_{\leq n}(S)$ ensures $\ord(e_1)=\ord(e_2)=\ord(e_3)=n$. Thus there is some $f_1\in G$ such that $(f_1,e_2)$ is a basis for $G$. Letting $e_1=xf_1+ye_2$, we see that $(e_1,e_2)$ is a basis for $G$ unless $\gcd(x,n):=n/h>1$. However, if this were the case, then $T=e_1^{[h]}\bdot e_2^{[xh]}$ is a zero-sum subsequence of $S$ for some $x\in [0,\frac{n}{h}-1]$ having length $|T|= h+xh\leq n$, contradicting that $0\notin \Sigma_{\leq n}(S)$. Therefore  $(e_1,e_2)$ is a basis for $G$, and likewise  $(e_1,e_3)$ and $(e_2,e_3)$ must also be bases for $G$. However, obtaining further restriction on $e_1$, $e_2$ and $e_3$ is much less trivial. We begin with the following lemma showing how the characterization is related to a statement involving the index (see \cite{uzi}) of the sequence $(-x_1)\bdot (-x_2)\bdot 1$, where $e_3=x_1e_1+x_2e_2$, and continue afterwards with a series of lemmas modifying slightly the main line of argument for the prime case from \cite{uzi}

\begin{lemma}\label{lem-zs3case}
Let $G=C_n \oplus C_n$ with $n\geq 2$.  Suppose $(e_1,e_2)$ is a basis for $G$ and $S=e_1^{[n-1]}\bdot e_2^{[n-1]}\bdot (x_1e_1+x_2e_2)^{[n-1]}$, where $x_1,x_2\in [0,n-1]$. Then $0\notin \Sigma_{\leq n}(S)$ if and only if
$(-x_1 k)_n +(-x_2k)_n+(k)_n> n$ for every $k\in [1, n-1]$.
\end{lemma}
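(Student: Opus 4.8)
The plan is to reduce the zero-sum condition to the stated numerical one by directly enumerating the subsequences of $S$ and computing their sums in the coordinates afforded by the basis $(e_1,e_2)$. Write $e_3:=x_1e_1+x_2e_2$ for the third repeated term. Since every term of $S$ equals one of $e_1,e_2,e_3$, an arbitrary subsequence $T\mid S$ has the form $T=e_1^{[a]}\bdot e_2^{[b]}\bdot e_3^{[c]}$ with $a,b,c\in [0,n-1]$, the upper bounds coming from the multiplicities $\vp_{e_1}(S)=\vp_{e_2}(S)=\vp_{e_3}(S)=n-1$. Expanding in the basis gives $\sigma(T)=(a+cx_1)e_1+(b+cx_2)e_2$, and since $(e_1,e_2)$ is a basis with $\ord(e_1)=\ord(e_2)=n$, we have $\sigma(T)=0$ precisely when $a\equiv -cx_1$ and $b\equiv -cx_2 \pmod n$.

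The key observation I would then make is that, for each fixed $c$, the residues $a$ and $b$ are forced: the unique representatives in $[0,n-1]$ are $a=(-cx_1)_n$ and $b=(-cx_2)_n$. Hence, for each $c\in [0,n-1]$, there is exactly one zero-sum subsequence of $S$ using precisely $c$ copies of $e_3$, and its length is $(-cx_1)_n+(-cx_2)_n+c$. The degenerate case $c=0$ forces $a=b=0$, yielding only the empty zero-sum, so the nontrivial zero-sums correspond exactly to $c\in [1,n-1]$; for these, $c=(c)_n$, and the length equals $(-cx_1)_n+(-cx_2)_n+(c)_n$.

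With this enumeration in hand, the equivalence is immediate. Namely, $0\in\Sigma_{\leq n}(S)$ holds if and only if some nontrivial zero-sum subsequence has length at most $n$, i.e.\ if and only if there exists $k\in [1,n-1]$ with $(-x_1k)_n+(-x_2k)_n+(k)_n\leq n$; negating both statements yields exactly the criterion that $(-x_1k)_n+(-x_2k)_n+(k)_n>n$ for every $k\in [1,n-1]$.

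I do not expect a serious obstacle here, as the entire content lies in correctly translating the vanishing of $\sigma(T)$ into the two congruences and recognizing the bijection between the choice of $c$ and the minimal zero-sum using $c$ copies of $e_3$. The one point requiring a little care is the boundary handling: one must confirm that the multiplicity constraints $a,b,c\leq n-1$ exclude no relevant zero-sum (they do not, since the forced values $(-cx_1)_n,(-cx_2)_n$ already lie in $[0,n-1]$) and that the trivial case $c=0$ is properly discarded, so that the quantified condition ranges over $k\in[1,n-1]$ rather than $[0,n-1]$.
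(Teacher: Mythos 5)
Your proposal is correct and follows essentially the same route as the paper's own proof: both enumerate zero-sum subsequences $T=e_1^{[a]}\bdot e_2^{[b]}\bdot(x_1e_1+x_2e_2)^{[c]}$, observe that the basis property forces $a=(-cx_1)_n$, $b=(-cx_2)_n$ and excludes $c=0$ for nontrivial $T$, and then read off the equivalence from the resulting length formula. Nothing further is needed.
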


\begin{proof}
Consider an arbitrary zero-sum subsequence $T$ of $S$ and let $T=e_1^{[k_1]}\bdot  e_2^{[k_2]}\bdot  (x_1 e_1+x_2 e_2)^{[k]}$, where $k_1, k_2, k \in [0,n-1]$.
Note we cannot have $k=0$ (assuming $T$ nontrivial) as $(e_1,e_2)$ is a basis, so $k\in [1,n-1]$.
Then we have $k_1=(-x_1 k)_n$ and $k_2=(-x_2 k)_n$. Conversely, given any $k\in [1,n-1]$, the subsequence $T$ defined above with $k_1=(-x_1 k)_n$ and $k_2=(-x_2 k)_n$ will be a nontrivial zero-sum. Now  $$|T|=k_1+k_2+k= (-x_1 k)_n + (-x_2 k)_n + ( k)_n.$$
If $(-x_1 k)_n + (-x_2 k)_n + (k)_n\leq n$ for some $k\in [1,n-1]$, then the corresponding subsequence $T$ defined using $k$ is a nontrivial zero-sum subsequence of length at most $n$, showing $0\in \Sigma_{\leq n}(S)$. On the other hand, if $0\in \Sigma_{\leq n}(S)$, then there is a nontrivial zero-sum of the form  $T=e_1^{[k_1]}\bdot  e_2^{[k_2]}\bdot  (x_1 e_1+x_2 e_2)^{[k]}$, for some $k\in [1,n-1]$, which satisfies $|T|=(-x_1 k)_n + (-x_2 k)_n + ( k)_n \leq n$.
\end{proof}

The following is special case of \cite[Proposition 2.1.1]{uzi}.

\begin{lemma}\label{lem-length3}
Let $n\geq 2$ and let $x_1,x_2,x_3\in \Z$ with $x_1+x_2+x_3\equiv 0 \mod n$. There there exists $k\in [1,n-1]$ with $\gcd(k,n)=1$ and $(kx_1)_n+(kx_2)_n+(kx_3)_n\leq n$.
\end{lemma}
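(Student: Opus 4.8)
The plan is to prove the statement by a short symmetry-and-counting argument that uses only the two values $k=1$ and $k=n-1$. First I would set $f(k)=(kx_1)_n+(kx_2)_n+(kx_3)_n$ and record the constraint it inherits from the hypothesis. Since $x_1+x_2+x_3\equiv 0\bmod n$, for every $k$ we have $(kx_1)_n+(kx_2)_n+(kx_3)_n\equiv k(x_1+x_2+x_3)\equiv 0\bmod n$, so $f(k)$ is a nonnegative multiple of $n$. As each summand lies in $[0,n-1]$, we get $0\leq f(k)\leq 3n-3<3n$, which pins $f(k)$ down to the discrete set $\{0,n,2n\}$. This is the step that genuinely uses the zero-sum hypothesis, and it is what makes the subsequent counting tight.

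Next I would exploit the reflection $k\mapsto -k$ in the concrete form of comparing $k=1$ with $k=n-1$. Both lie in $[1,n-1]$ (as $n\geq 2$) and both are coprime to $n$, so either is an admissible choice. For each index $i$, if $n\mid x_i$ then $(x_i)_n=(-x_i)_n=0$, while if $n\nmid x_i$ then $(x_i)_n+(-x_i)_n=(x_i)_n+\big(n-(x_i)_n\big)=n$. Summing over $i$ yields the key identity
\[
f(1)+f(n-1)=n\cdot\big|\{i:\; n\nmid x_i\}\big|\leq 3n.
\]

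Finally I would conclude by pigeonhole on the set $\{0,n,2n\}$. If both $f(1)>n$ and $f(n-1)>n$, then, being multiples of $n$ bounded above by $2n$, both must equal $2n$, forcing $f(1)+f(n-1)=4n>3n$ and contradicting the displayed identity. Hence at least one of $f(1),f(n-1)$ is at most $n$, and the corresponding $k\in\{1,n-1\}$ satisfies both $\gcd(k,n)=1$ and $(kx_1)_n+(kx_2)_n+(kx_3)_n\leq n$, as required. When $n=2$ the two candidates coincide, and the identity gives $f(1)\leq n$ directly.

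I do not expect a serious obstacle here; the one point worth flagging is that the coprimality constraint is met automatically by the extreme choices $k=\pm 1$, so that the full strength of Proposition 2.1.1 of \cite{uzi}, which permits an arbitrary coprime multiplier, is not needed in the length-three case. The only mild subtlety is that the pigeonhole step must use the sharp bound $f(k)\leq 2n$ (rather than merely $f(k)<3n$) together with the fact that $f$ takes values in $\{0,n,2n\}$; without pinning $f$ to this set the counting would not close.
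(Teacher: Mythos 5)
Your proof is correct. Note, however, that the paper does not actually prove this lemma at all: it simply records it as a special case of Proposition~2.1.1 of \cite{uzi}, a more general result on the index of short sequences over $C_n$, in which the multiplier $k$ is allowed to be an arbitrary unit modulo $n$. Your argument is a genuine, self-contained alternative: setting $f(k)=(kx_1)_n+(kx_2)_n+(kx_3)_n$, the hypothesis forces $f(k)\in\{0,n,2n\}$, and the reflection identity $f(1)+f(n-1)=n\cdot|\{i:\,n\nmid x_i\}|\leq 3n$ rules out $f(1)=f(n-1)=2n$, so one of the two admissible multipliers $k\in\{1,n-1\}$ already works. The counting is tight precisely because there are only three terms (two values of $2n$ would sum to $4n>3n$), which is why the full strength of the cited proposition --- where longer sequences genuinely require multipliers other than $\pm 1$ --- is not needed here. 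What the citation buys the paper is brevity and a pointer to the general index machinery used elsewhere in Section~2; what your argument buys is a two-line elementary proof that makes the length-three case independent of \cite{uzi}. The only stylistic point worth making explicit is the identification $((n-1)x_i)_n=(-x_i)_n$, which you use implicitly when writing $f(n-1)$.
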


\begin{lemma}\label{lem-X}Let $n\geq 2$.
For $x\in [1, n-1]$ with $\gcd(x,n)=1$, let
\[
X(x)=\left\{  \left\lceil \frac{n}{x}\right\rceil, \left\lceil\frac{2n}{x}\right\rceil,\dots, \left\lceil\frac{(x-1)n}{x}\right \rceil \right\}\subseteq [2,n-1].
\]
\begin{enumerate}
	\item[1.] $|X(x)|=x-1$.
\item[2.] Let $d=\lfloor \frac{n}{x}\rfloor$. The difference between any two consecutive elements in $ X(x)$ is either $d$ or $d+1$ with $\min X(x)=d+1$ and $\max X(x)=n-d$ (for $x\geq 2$).
\item[3.] $[2,n-1]=X(x)\cup X(n-x)$ is a disjoint union.

	\item[4.] Let $\Delta (u,x)=(ux)_n - ((u-1)x)_n$. For every $u\in [1,n-1]$, $\Delta(u,x)\in \{x,x-n\}$ with $u\in X(x)$ iff $\Delta(u,x)=x-n$.
\end{enumerate}
\end{lemma}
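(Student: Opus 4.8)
The plan is to treat the four parts in the order $1,2,4,3$, since Part~4 supplies the cleanest route to Part~3. Throughout I use that $\gcd(x,n)=1$ forces $jn/x\notin\Z$ for every $j\in[1,x-1]$.

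For Part~1, I would observe that the numbers $jn/x$ with $j\in[0,x]$ subdivide $[0,n]$ into $x$ open intervals, each of common length $n/x$. Since $x\le n-1$, this gap satisfies $n/x>1$, so every open interval $(jn/x,(j+1)n/x)$ contains an integer; as its endpoints are non-integers, this forces $\ceil{jn/x}<\ceil{(j+1)n/x}$, so the $x-1$ values are strictly increasing and hence distinct, giving $|X(x)|=x-1$. The containment $X(x)\subseteq[2,n-1]$ then follows from the two endpoints (and monotonicity): $\ceil{n/x}=\floor{n/x}+1\ge 2$ and $\ceil{(x-1)n/x}=n-\floor{n/x}\le n-1$. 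For Part~2, writing $d=\floor{n/x}$, these same endpoint computations give $\min X(x)=\ceil{n/x}=d+1$ and $\max X(x)=\ceil{(x-1)n/x}=n-\floor{n/x}=n-d$ for $x\ge 2$. Since consecutive members differ by $\ceil{(j+1)n/x}-\ceil{jn/x}$ while the underlying reals differ by $n/x\in(d,d+1)$, each such gap is either $d$ or $d+1$, which is the assertion.

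The heart of the argument is Part~4, which I expect to be the main obstacle, as it links the ceiling description of $X(x)$ to the arithmetic ``descent'' description via $\Delta$. First, since $ux=(u-1)x+x$, the residues satisfy $(ux)_n\equiv((u-1)x)_n+x\pmod n$, and as both residues lie in $[0,n-1]$ the difference $\Delta(u,x)$ is congruent to $x$ modulo $n$ and lies in $(-n,n)$, hence equals $x$ or $x-n$. Summing $\Delta(u,x)$ over $u\in[1,n-1]$ telescopes to $((n-1)x)_n=n-x$; comparing this with $(n-1-t)x+t(x-n)$, where $t$ is the number of indices with $\Delta(u,x)=x-n$, forces $t=x-1$. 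It then remains to show that every $u\in X(x)$ is such an index. Here I would take $u=\ceil{jn/x}$, write $ux=jn+r$, and use the defining inequalities $(u-1)x<jn\le ux$ to pin $r$ into $[0,x-1]$; this yields $(ux)_n=r$ and $((u-1)x)_n=n+r-x$, whence $\Delta(u,x)=x-n$. Since $|X(x)|=x-1=t$ and $X(x)$ is contained in the set of descent indices, the two sets coincide, proving Part~4. The delicate points are the strict inequalities pinning down $r$ and the verification that $n+r-x\in[0,n-1]$.

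Finally, Part~3 would follow from Part~4 together with the symmetry $\Delta(u,n-x)=-\Delta(u,x)$ for $u\in[2,n-1]$, which comes from the identity $(u(n-x))_n=n-(ux)_n$ (valid because $\gcd(x,n)=1$ makes $(ux)_n\ne 0$ for $u\in[1,n-1]$). Consequently $u\in X(x)$ iff $\Delta(u,x)=x-n$ iff $\Delta(u,n-x)=n-x$, i.e.\ iff $u\notin X(n-x)$; thus $X(x)$ and $X(n-x)$ are disjoint subsets of $[2,n-1]$. Since $|X(x)|+|X(n-x)|=(x-1)+(n-x-1)=n-2=|[2,n-1]|$ by Part~1, disjointness forces the union to be all of $[2,n-1]$, giving the claimed disjoint decomposition.
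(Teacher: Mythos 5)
Your proof is correct, and it takes a genuinely different (and more self-contained) route than the paper. The paper simply cites Items 1--3 from an earlier reference and only writes out a proof of Item 4, so your direct arguments for Parts 1--3 already go beyond what the paper does; in particular, you derive Part 3 from Part 4 via the symmetry $\Delta(u,n-x)=-\Delta(u,x)$ for $u\in[2,n-1]$ together with the cardinality count $(x-1)+(n-x-1)=n-2$, correctly noting that the identity $(u(n-x))_n=n-(ux)_n$ requires $(ux)_n\neq 0$ and hence the restriction $u\geq 2$. For Item 4, the two arguments diverge: the paper proves the biconditional directly, showing that $\Delta(u,x)=x-n$ if and only if $u=\lceil tn/x\rceil$ where $t=\lfloor ux/n\rfloor$, and then verifies that as $u$ ranges over $[1,n-1]$ the relevant index $t$ ranges exactly over $[1,x-1]$ (the slightly delicate step being that $u=\lceil tn/x\rceil$ with $t\in[1,x-1]$ forces $t=\lfloor ux/n\rfloor$). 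You instead prove only the inclusion $X(x)\subseteq\{u:\Delta(u,x)=x-n\}$ by the computation $ux=jn+r$ with $r\in[0,x-1]$, and then force equality by telescoping $\sum_{u=1}^{n-1}\Delta(u,x)=((n-1)x)_n=n-x$, which pins the number of descent indices at exactly $x-1=|X(x)|$. Both are sound; your counting argument trades the paper's fiddly consistency check on $t$ for a dependence on Item 1, which you have already established independently.
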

\begin{proof}
Item 1--3 are given in \cite[Lemma 2.4]{uzi}. For Item 4, we have $-n<\Delta(u,x)<n$, and since $\Delta(u,x)\equiv x\mod n$, it follows  that $\Delta(u,x)\in \{x,x-n\}$.
Let $u\in \Z$ and  $t=\lfloor \frac{ux}{n}\rfloor$, so
$$tn\leq ux< (t+1)n\quad\und\quad ux=(ux)_n+tn=((u-1)x)_n+\Delta(u,x)+tn.$$ Hence
 $\Delta(u)=x-n$ when  $(u-1)x=((u-1)x)_n+(t-1)n< tn$, and $\Delta(u)=x$ when $(u-1)x=((u-1)x)_n+tn\geq tn$. Thus  $\Delta(u)=x-n$ if and only if $(u-1)x<tn$. Since $tn\leq ux$ always holds, this is equivalent to
  $\frac{tn}{x}\leq  u<\frac{tn}{x}+1$, and as $u$ is an integer, this is equivalent to $u=\lceil\frac{tn}{x}\rceil$. Therefore $\Delta(u,x)=x-n$ if and only if $u=\lceil\frac{tn}{x}\rceil$, where $t=\lfloor \frac{ux}{n}\rfloor$. Now restrict to $u\in [1,n-1]$.
  Since $u<n$, we have $t=\lfloor\frac{ux}{n}\rfloor\leq  x-1$.
  Since $u,\,x\geq 1$, we have $t=\lfloor\frac{ux}{n}\rfloor\geq 0$. However, $u=\lceil\frac{tn}{x}\rceil$ with $u\geq 1$ forces $t\neq 0$, while $u=\lceil\frac{tn}{x}\rceil=\frac{tn+r}{x}$ with $t\in [1,x-1]$ (and $r\in [0,x-1]$) ensures $t=\frac{ux-r}{n}=\lfloor\frac{ux}{n}\rfloor$ (as $0\leq r<x\leq n$). As a result, for $u\in [1,n-1]$, we find that $\Delta(u,x)=x-n$ if and only if $u=\lceil\frac{tn}{x}\rceil$ for some $t\in [1,x-1]$, as desired.
\end{proof}

\begin{lemma}\label{x-to-n+1} Let $n\geq 2$ and  let  $x_1,x_2,x_3\in \Z$ with  $\gcd(x_i,n)=1$ for all $i\in [1,3]$. If  $(kx_1)_n+(kx_2)_n+(kx_3)_n> n$ for every $k\in [1,n-1]$,   then  $x_i+x_j\equiv 0\mod n$ for some distinct $i,j\in [1,3]$.
\end{lemma}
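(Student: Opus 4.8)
The plan is to normalize by a unit, then use the reflection $k\mapsto n-k$ to pin down the counting function $g(k):=(kx_1)_n+(kx_2)_n+(kx_3)_n$ almost exactly, and finally read off the complementary pair from the resulting rigid structure. Multiplying $x_1,x_2,x_3$ by a common unit $u$ coprime to $n$ merely permutes the values $\{(kx_i)_n:k\in[1,n-1]\}$ and preserves both hypothesis and conclusion, so I may assume $x_1=1$; write $a_i=(x_i)_n$ and $\sigma=a_1+a_2+a_3$.

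First I would record the easy consequences of the hypothesis. Taking $k=1$ gives $\sigma>n$, while the reflection identity $g(k)+g(n-k)=\sum_i\big((kx_i)_n+(-kx_i)_n\big)=3n$ (valid since each $x_i$ is coprime to $n$) upgrades $g(k)>n$ to the two-sided bound $n<g(k)<2n$ for all $k\in[1,n-1]$, so also $\sigma<2n$. Next I analyze the increments $g(k)-g(k-1)=\sum_i\Delta(k,x_i)$, where by Lemma \ref{lem-X}.4 each $\Delta(k,x_i)\in\{a_i,a_i-n\}$. Because $g$ is trapped in a band of width $n$, a step in which all three (resp.\ none) of the $x_i$ wrap would move $g$ by $\sigma-3n<-n$ (resp.\ $\sigma>n$) and eject it from $(n,2n)$; hence at each $k\in[2,n-1]$ exactly one or two of the $x_i$ wrap. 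Reducing mod $n$, every increment is $\equiv\sigma\equiv\sigma-n=:p$, so $g(k)\equiv kp$; together with $g(k)\in(n,2n)$ this forces $(kp)_n\neq0$ for all $k\in[1,n-1]$, i.e.\ $\gcd(p,n)=1$, and fixes the value exactly:
\[
(kx_1)_n+(kx_2)_n+(kx_3)_n=n+(kp)_n\qquad\text{for all }k\in[1,n-1].
\]

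After replacing $(x_1,x_2,x_3)$ by $p^{-1}(x_1,x_2,x_3)$ (again a unit multiple) I may assume $p=1$, so $g(k)=n+k$ and $\sigma=a_1+a_2+a_3=n+1$. Evaluating the exact formula at $k=2$ gives $2\sigma-nt=n+2$, whence exactly one $a_i$, say $a_3$, exceeds $n/2$; moreover the increment analysis now says exactly one of the $a_i$ wraps at each step, so $X(a_1),X(a_2),X(a_3)$ partition $[2,n-1]$ (their sizes $a_i-1$ sum to $n-2$ by Lemma \ref{lem-X}.1). Setting $c=a_1+a_2-1=n-a_3$ and using Lemma \ref{lem-X}.3 to identify $[2,n-1]\setminus X(a_3)$ with $X(c)$, the partition collapses to the rigidity statement
\[
X(a_1)\sqcup X(a_2)=X(a_1+a_2-1),
\]
and, since $a_1,a_2<n/2$ excludes $a_1+a_2=n$, the desired conclusion $a_i+a_j\equiv0$ is equivalent to $a_1=1$ or $a_2=1$.

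The hard part will be this last rigidity statement: that two Beatty-type sets $X(a_1),X(a_2)$ can tile a third set $X(a_1+a_2-1)$ only when one of $a_1,a_2$ is $1$. I would attack it through the gap structure of Lemma \ref{lem-X}.2. Assuming $2\le a_1\le a_2<n/2$ for contradiction, matching the smallest and largest elements of the three sets forces $\lfloor n/a_2\rfloor=\lfloor n/c\rfloor$ and $a_1<a_2$, so that $X(a_2)\subsetneq X(c)$ are two sets with equal endpoints and gaps confined to $\{\lfloor n/\cdot\rfloor,\lfloor n/\cdot\rfloor+1\}$; equivalently, in the floor form $\lfloor ka_1/n\rfloor+\lfloor ka_2/n\rfloor=\lfloor kc/n\rfloor$ one must exhibit a $k$ (arising from a near-return of $a_2$) at which the tiling fails. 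This is precisely where the composite case departs from the prime case of \cite{uzi}: there coprimality is automatic and a crude count of $X$ suffices, whereas here the comparison must be routed through the divisor-sensitive gap estimates of Lemma \ref{lem-X}, and one may need to supplement it by feeding an auxiliary zero-sum triple such as $(a_1,a_2,a_3-1)$ into Lemma \ref{lem-length3} and exploiting its equality case. I expect this extremal step, rather than the reduction that precedes it, to demand the most care.
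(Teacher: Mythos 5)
Your reduction is sound and tracks the paper's argument closely: the reflection identity $g(k)+g(n-k)=3n$ giving $n<g(k)<2n$, the increment analysis via Lemma \ref{lem-X}.4 pinning $g(k)=n+(k\sigma)_n$ (the paper's Claim 1, reached there by first invoking Lemma \ref{lem-length3} to get $\gcd(\sigma,n)=1$ and normalizing $\sigma\equiv 1$ up front; your route through $(kp)_n\neq 0$ is a legitimate variant), the resulting partition $[2,n-1]=X(a_1)\sqcup X(a_2)\sqcup X(a_3)$ (Claim 2), and the reduction via Lemma \ref{lem-X}.3 to the tiling statement $X(a_1)\sqcup X(a_2)=X(a_1+a_2-1)$ with the goal of forcing $a_1=1$ or $a_2=1$. (Minor blemish: your opening normalization $x_1=1$ is incompatible with the later normalization $p=1$; since the former is never used, simply drop it.)

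However, the proof is not complete: the tiling rigidity you correctly identify as ``the hard part'' is exactly the content of the paper's Claim 3, and you only sketch a plan for it (``exhibit a $k$ at which the tiling fails'') without executing it. The missing argument is short but essential. Write $c=a_1+a_2-1$ and suppose $a_1,a_2\geq 2$. The minimal element $\lfloor n/c\rfloor+1$ of $X(c)$ lies in, and is then the minimum of, one of the two tiles, say $X(a_2)$, so $d:=\lfloor n/c\rfloor=\lfloor n/a_2\rfloor\geq 2$. By Lemma \ref{lem-X}.2 both $X(c)$ and $X(a_2)$ have consecutive gaps in $\{d,d+1\}$; since $X(a_2)\subseteq X(c)$ and exactly one of $z+d,\,z+d+1$ lies in $X(c)$ after each $z\in X(c)$, induction shows the successor in $X(a_2)$ of any non-maximal element must coincide with its successor in $X(c)$, i.e.\ $X(a_2)$ is an initial segment of $X(c)$. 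But $\max X(a_2)=n-d=\max X(c)$, so $X(a_2)=X(c)$, forcing $X(a_1)=\emptyset$ and $a_1=1$ by Lemma \ref{lem-X}.1, a contradiction. No appeal to an ``equality case'' of Lemma \ref{lem-length3} or to an auxiliary triple $(a_1,a_2,a_3-1)$ is needed; that part of your plan is a dead end, and without the segment-plus-endpoint argument above your proof does not close.
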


\begin{proof}
If $\gcd(x_1+x_2+x_3,n)\neq 1$, then there exists some $r\in [1,n-1]$ such that $rx_1+rx_2+rx_3\equiv 0\mod n$. Applying Lemma \ref{lem-length3} to $rx_1,rx_2,rx_3\in \Z$, we find some $s\in [1,n-1]$ with $\gcd(s,n)=1$ and $(srx_1)_n+(srx_2)_n+(srx_3)_n\leq n$. But then, setting $k=(sr)_n\in [0,n-1]$, we have $(kx_1)_n+(kx_2)_n+(kx_3)_n\leq n$ with $k=(sr)_n\neq 0$ since $r\in [1,n-1]$ and $\gcd(s,n)=1$, which is contrary to hypothesis. Therefore, we conclude that $\gcd(x_1+x_2+x_3,n)=1$. As a result, replacing $x_1,x_2,x_3\in \Z$ with $sx_1,sx_2,sx_3\in \Z$, where $s\in \Z$ is an integer congruent to the inverse of $x_1+x_2+x_3$ modulo $n$, we can w.l.o.g. assume $$x_1+x_2+x_3\equiv 1\mod n.$$ Replacing each $x_i$ by $(x_i)_n$, we can w.l.o.g. assume $x_1,x_2,x_3\in [1,n-1]$. If $n=2$, then $x_1=x_2=x_3=1$, and the lemma holds. Therefore we may assume $n\geq 3$.

\subsection*{Claim 1:} For every $u\in [1,n-1]$,
\[
(ux_1)_n+(ux_2)_n+(ux_3)_n=n+u.
\]
\begin{proof}
Indeed, $(ux_1)_n+(ux_2)_n+(ux_3)_n\equiv u(x_1+x_2+x_3)\equiv u$, so $(ux_1)_n+(ux_2)_n+(ux_3)_n=kn+u$ for some $k\in\{0,1,2\}$. By hypothesis $kn+u\geq n+1$, so $(ux_1)_n+(ux_2)_n+(ux_3)_n\in \{n+u,2n+u\}$ for every $u\in [1,n-1]$.
Since $u\in [1,n-1]$ and $\gcd(x_i,n)=1$ for every $i\in [1,3]$, we have $(ux_i)_n\neq 0$ and $((n-u)x_i)_n=n-(ux_i)_n$ for all $i\in [1,3]$. Consequently,  if  $(ux_1)_n+(ux_2)_n+(ux_3)_n=2n+u$ for some $u\in [1,n-1]$, then we find $((n-u)x_1)_n+((n-u)x_2)_n+((n-u)x_3)_n=\big(n-(ux_1)_n\big)+\big(n-(ux_1)_n\big)+\big(n-(ux_1)_n\big)=3n-(2n+u)=n-u\leq n$, which is contrary to hypothesis. Claim 1 now follows.
\end{proof}

Let $X_i=X(x_i)$ and $\Delta(u,x_i)$ for $i\in [1,3]$ be as  defined in Lemma \ref{lem-X}. The special case $u=1$ in Claim 1 ensures $$x_1+x_2+x_3=n+1.$$ Hence, if $x_k=1$ for some $k\in [1,3]$, then the desired conclusion follows with $\{i,j\}=[1,3]\setminus k$, so we may assume \be\label{xiBig}x_1,\,x_2,\,x_3\geq 2,\ee ensuring $X_1$, $X_2$ and $X_3$ are each nonempty (by Lemma \ref{lem-X}.1).

\subsection*{ Claim 2.} $[2,n-1]=X_1\cup X_2\cup X_3$  is a disjoint union.

\begin{proof}
By Claim 1, for every $u\in [2,n-1]$, we have
\[
\Sum{i=1}{3} \Delta(u,x_i)=\Sum{i=1}{3}(ux_i)_n -\Sum{i=1}{3}((u-1)x_i)_n=(n+u)-(n+u-1)=1.
\]
On the other hand, if $u$ belongs to $s\in [0,3]$ of the sets $X_1$, $X_2$ and $X_3$, then Lemma \ref{lem-X}.4 implies  $1=\Sum{i=1}{3}\Delta(u,x_i)=x_1+x_2+x_3-sn=1+(1-s)n$, forcing $s=1$, which completes the claim as $X_i=X(x_i)\subseteq [2,n-1]$ holds trivially for any $x_i\in [1,n-1]$.
\end{proof}

Note that $2\in X_i=X(x_i)$ precisely when $\lceil \frac{n}{x_i}\rceil=2$, i.e., when $\frac{n}{2}\leq  x_i\leq n-1$. Consequently, in view of Claim 2 and $n\geq 3$, we can w.l.o.g. assume $$x_2,x_3<\frac{n}{2}\leq x_1.$$
Let  $x_1'=n-x_1\in [1,\frac{n}{2}]$ and set $X_1'=X(x_1')$. By Lemma \ref{lem-X}.3 and Claim 2, we have \be\label{pawclaw}X'_1=[2,n-1]\setminus X_1=X_2\cup X_3,\ee with the union disjoint. Hence, since $X_2$ and $X_3$ are nonempty, as noted above, it follows that $X'_1$ is also nonempty, ensuring $x'_1\geq 2$ (by Lemma \ref{lem-X}.1).

As in Lemma \ref{lem-X}, let $d_2=\lfloor \frac{n}{x_2}\rfloor$, $d_3=\lfloor \frac{n}{x_3}\rfloor$ and $d_1'=\lfloor \frac{n}{x_1'}\rfloor$. Note  $d_2, d_3,d'_1 \geq 2$ since $x_2,x_3,x'_1\leq \frac{n}{2}$. By Lemma \ref{lem-X}.2, the minimal element in $X_i$ for $i=2,3$ (resp. the minimal element in $X_1'$) is $d_i+1$ (resp. $d_1'+1$). Since $ X_1'=X_2\cup X_3$, the minimal element $d_1'+1$ in $X'_1$ must either equal the minimal element in $X_2$ or the minimal element in $X_3$, say w.l.o.g. the former, in which case $d'_1+1=d_2+1\in X_2$. Denote this joint value by $d:=d'_1=d_2$.

\subsection*{Claim 3:} $X_1'=X_2$.

\begin{proof}
  By \eqref{pawclaw}, we have  $X_2\subseteq X_1'$, and the first element of $X_1'$ is in $X_2$ by assumption, equal to the first element of $X_2$. Let $d+1=z_1<z_2<\dots<z_{x'_1-1}$ denote the elements of $X_1'$. Assume $z_k\in X_2$ for $k<x_1'-1$. By Lemma \ref{lem-X}.2 applied to $X_1'$, since $d\geq 2$, $z_{k+1}$ is one of the values $z_k+d$ or $z_k+d+1$, and exactly one of these elements is in $X_1'$. But $d_2=d$, so Lemma \ref{lem-X}.2 applied to $X_2$ yields that the next element of $X_2$ after $z_k\in X_2$ is also either $z_k+d$ or $z_k+d+1$. Since only one of these two possibilities lies in $X'_1$, namely the value between them equal to $z_{k+1}$, we are forced to conclude from $X_2\subseteq X'_1$ that the next element in $X_2$ after $z_k\in X_2$ is  $z_{k+1}\in X_2$. This shows, via  induction on $k$, that $X_2$ equals the first $|X_2|$ elements of $X'_1$. However, Lemma \ref{lem-X}.2 implies that $\max X_2=n-d_2=n-d=n-d'_1=\max X'_1$, which combined with the previous conclusion forces $X'_1=X_2$.
 \end{proof}

Since $X'_1=X_2\cup X_3$ is a disjoint union, we conclude from Claim 3 that $X_3=\emptyset$, whence  $x_3-1=|X_3|=0$ by Lemma \ref{lem-X}.1, contradicting \eqref{xiBig}.
\end{proof}

\begin{proposition}\label{thm-n-1-case}For any $n\geq 2$,
Conjecture \ref{conj-shortzs} holds for $k=n-1$ in $C_n\oplus C_n$.
\end{proposition}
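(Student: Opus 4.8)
The plan is to feed Property C into the numerical criterion of Lemma \ref{lem-zs3case} and the arithmetic dichotomy of Lemma \ref{x-to-n+1}, and then to choose the basis so that the resulting sequence lands exactly in the shape of Conjecture \ref{conj-shortzs}.4(a). Since $m=1$, the ``moreover'' clause of the conjecture requires only option (a), and $s\in[1,m]$ forces $s=1$; thus the whole task reduces to producing a single basis $(e_1,e_2)$ and a single $x\in[1,n-1]$ with $\gcd(x,n)=1$ for which $S=e_1^{[n-1]}\bdot e_2^{[n-1]}\bdot (xe_1+e_2)^{[n-1]}$.

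First I would take an arbitrary $S$ with $|S|=3n-3$ and $0\notin\Sigma_{\leq n}(S)$, and invoke Property C (available since the $k=1$ case of Conjecture \ref{conj-shortzs} in $C_n\oplus C_n$ is known) to write $S=e_1^{[n-1]}\bdot e_2^{[n-1]}\bdot e_3^{[n-1]}$ with distinct $e_1,e_2,e_3$. As already observed at the start of this section, each $e_i$ has order $n$ and any two of them form a basis. Fixing $(e_1,e_2)$ and writing $e_3=x_1e_1+x_2e_2$ with $x_1,x_2\in[0,n-1]$, the fact that $(e_2,e_3)$ and $(e_1,e_3)$ are bases forces $\gcd(x_1,n)=\gcd(x_2,n)=1$, hence $x_1,x_2\in[1,n-1]$. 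By Lemma \ref{lem-zs3case}, the avoidance hypothesis $0\notin\Sigma_{\leq n}(S)$ is equivalent to $(-x_1k)_n+(-x_2k)_n+(k)_n>n$ for all $k\in[1,n-1]$. Applying Lemma \ref{x-to-n+1} to the triple $-x_1,\,-x_2,\,1$ (each coprime to $n$), I conclude that two of these three integers sum to $0\mod n$; equivalently, two of the coefficients of the relation $x_1e_1+x_2e_2-e_3=0$ cancel modulo $n$.

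The concluding step is the normalization that pins down the structure. Writing the relation as $a_1e_1+a_2e_2+a_3e_3=0$ with $(a_1,a_2,a_3)=(x_1,x_2,-1)$, note that all three $a_i$ are units modulo $n$. If $a_i+a_j\equiv 0\mod n$ and $l$ is the remaining index, then scaling the relation by $a_i^{-1}$ yields $e_i-e_j+xe_l=0$ with $x=(a_i^{-1}a_l)_n\in[1,n-1]$ and $\gcd(x,n)=1$, that is, $e_j=xe_l+e_i$. Taking $(e_l,e_i)$ as the basis --- legitimate since any two of $e_1,e_2,e_3$ form one --- rewrites $S=e_l^{[n-1]}\bdot e_i^{[n-1]}\bdot (xe_l+e_i)^{[n-1]}$, which is precisely Conjecture \ref{conj-shortzs}.4(a) with $s=1$. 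I expect the genuine difficulty to lie entirely in the arithmetic dichotomy of Lemma \ref{x-to-n+1} (which rests on the delicate combinatorics of the sets $X(x)$ in Lemma \ref{lem-X}); once that is granted, the remaining work is the bookkeeping above --- extracting the coprimality of $x_1,x_2$ from the basis conditions and selecting the permutation and unit scaling that normalizes the relation to the form $(x,1,-1)$.
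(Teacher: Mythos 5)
Your proposal is correct and follows essentially the same route as the paper: Property C, then Lemma \ref{lem-zs3case} to convert the hypothesis into the inequality $(-x_1k)_n+(-x_2k)_n+(k)_n>n$, then Lemma \ref{x-to-n+1} applied to $-x_1,-x_2,1$, and finally a change of basis. The only cosmetic difference is that you package the concluding case analysis ($x_1=1$, $x_2=1$, or $x_1+x_2\equiv 0\bmod n$) into a single unit-scaling normalization of the relation $x_1e_1+x_2e_2-e_3=0$, where the paper treats the three cases separately; the resulting bases and coefficients agree.
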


\begin{proof}
Let $G=C_n\oplus C_n$ and let $S\in \Fc(G)$ be a sequence with $|S|=3n-3$ and $0\notin \Sigma_{\leq n}(S)$. Then $S=e_1^{[n-1]}\bdot e_2^{[n-1]}\bdot e_3^{[n-1]}$ for some $e_1,e_2,e_3\in G$ which pairwise form bases for $G$, as noted at the start of Section \ref{sec-k=n-1}. Write $e_3=x_1e_1+x_2e_2$ with $x_1,\,x_2\in [1,n-1]$. By Lemma \ref{lem-zs3case}, the hypothesis that $0\notin \Sigma_{\leq n}(S)$ is equivalent to $(-x_1\cdot k)_n+(-x_2\cdot k)_n+(k\cdot 1)_n>n$ holding for all $k\in [1,n-1]$. Since $(e_1,e_3)$ and $(e_2,e_3)$ are both bases, we must have $\gcd(x_1,n)=\gcd(x_2,n)=1$. Applying Lemma \ref{x-to-n+1} using the elements $-x_1$, $-x_2$ and $1$, we deduce that either $x_1+x_2\equiv 0\mod n$, or $x_1=1$, or $x_2=1$. If $x_1=1$, then the conclusion of Conjecture \ref{conj-shortzs} holds using the basis $(e_2,e_1)$. If $x_2=1$, then the conclusion of Conjecture \ref{conj-shortzs} holds using the basis $(e_1,e_2)$. If $x_1+x_2\equiv 0\mod n$, then $e_3=x_1e_1-x_1e_2$. In this final case, letting $y\in [1,n-1]$ be the multiplicative inverse of $x_1$ modulo $n$, we find that $ye_3=e_1-e_2$ and $e_1=ye_3+e_2$, in which case Conjecture \ref{conj-shortzs} holds using the basis $(e_3,e_2)$, which completes the proof.
\end{proof}

\section{Reduction to the Diagonal Case}

The goal of this section is to prove Theorem \ref{thm-mult-offdiag} as well as the three corollaries giving examples where our results confirm new cases in Conjecture \ref{conj-shortzs} without restriction. For the proof, we will need the following characterization of maximal length minimal zero-sums in a cyclic group, which is a direct consequence of \cite[Theorem 1]{hamconj} \cite[Theorem A]{chaoI} applied to $\Sigma_{n}(S\bdot 0^{[n-1]})=\Sigma(S)$ using $k=3$, where $S\in \Fc(C_n)$ is a zero-sum free sequence of maximal length $n-1$ (the case $n=2$ is trivial).

\begin{theirtheorem}
\label{thm-cyclic-char} Let $n\geq 2$, let $G=C_n$. If $S\in \Fc(G)$ is a minimal zero-sum sequence of length $|S|=\mathsf D(G)=n$, then $S=g^{[n]}$ for some $g\in G$ with $\ord(g)=n$. If $S\in \Fc(G)$ is a zero-sum free sequence of length $|S|=\mathsf D(G)-1=n-1$, then $S=g^{[n-1]}$ for some $g\in G$ with $\ord(g)=n$
\end{theirtheorem}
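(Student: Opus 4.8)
The plan is to prove the second (zero-sum free) assertion first and then deduce the first (minimal zero-sum) assertion from it, since the two are linked by the removal of a single term. Throughout I write $C_n$ additively and use $\mathsf D(C_n)=n$.

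For the zero-sum free statement, let $T=g_1\bdot\ldots\bdot g_{n-1}$ be zero-sum free with $|T|=n-1$. The first step is to record that, \emph{for any ordering} of the terms of $T$, the $n$ partial sums $s_0=0,\ s_1=g_1,\ s_2=g_1+g_2,\ \ldots,\ s_{n-1}=g_1+\ldots+g_{n-1}$ are pairwise distinct: if $s_i=s_j$ with $j<i$, then the block $g_{j+1}\bdot\ldots\bdot g_i$ would be a nontrivial zero-sum subsequence, contrary to $T$ being zero-sum free. As there are $n$ of them in a group of order $n$, these partial sums exhaust $C_n$ in every ordering.

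The key step is a swapping argument forcing all terms to coincide. Fix two terms of $T$ with values $a$ and $b$, and choose an ordering placing them in consecutive positions $i$ and $i+1$. Comparing this ordering with the one obtained by transposing those two positions, every partial sum is unchanged except the $i$-th, which passes from $s_{i-1}+a$ to $s_{i-1}+b$ (the $(i+1)$-st sum $s_{i-1}+a+b$ is common to both). Since both orderings share the same $n-1$ remaining partial sums and both must still enumerate all of $C_n$, the single omitted value is determined by those $n-1$ sums and must equal both $s_{i-1}+a$ and $s_{i-1}+b$; hence $a=b$. Because $T$ is unordered, any two terms can be made adjacent, so all terms of $T$ are equal, giving $T=g^{[n-1]}$ for some $g\in G$. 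Finally, $\ord(g)=n$: if $\ord(g)=d<n$, then $d\le n-1$ and $g^{[d]}\mid T$ is a nontrivial zero-sum, a contradiction.

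For the minimal zero-sum statement, let $S$ be a minimal zero-sum with $|S|=n$ and let $g\mid S$ be any term. Then $T:=S\bdot g^{[-1]}$ has length $n-1$ and is zero-sum free: any nontrivial zero-sum $U\mid T$ would be a proper nontrivial zero-sum subsequence of $S$, contradicting minimality. By the case just proved, $T=h^{[n-1]}$ with $\ord(h)=n$, so $\sigma(T)=(n-1)h=-h$; since $\sigma(S)=0$ we get $g=-\sigma(T)=h$, whence $S=h^{[n]}$ with $h$ of order $n$. The only nonroutine ingredient is the swapping argument; the passage between the two assertions and the order computation are immediate.
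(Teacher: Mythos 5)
Your proof is correct, but it takes a genuinely different route from the paper. The paper does not prove Theorem~\ref{thm-cyclic-char} from scratch: it obtains it as a direct consequence of the cited results \cite{hamconj} \cite{chaoI} on representing the subsequence sums $\Sigma_n(S\bdot 0^{[n-1]})=\Sigma(S)$ (the resolution of Hamidoune's conjecture, applied with $k=3$), so its ``proof'' is essentially a one-line specialization of heavier machinery already needed elsewhere in that line of work. You instead give the classical, self-contained argument: the $n$ partial sums of any ordering of a zero-sum free sequence of length $n-1$ are distinct, hence exhaust $C_n$, and transposing two adjacent terms changes exactly one partial sum while preserving the property of exhausting $C_n$, which forces the two terms to be equal; the order computation and the passage from the zero-sum free case to the minimal zero-sum case (removing one term and noting $g=-\sigma(T)$, using that minimality is equivalent to the absence of proper nontrivial zero-sum subsequences) are routine and handled correctly. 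Your version costs a short pigeonhole-plus-swap argument but buys complete self-containment; the paper's version costs a dependency on \cite{hamconj} \cite{chaoI} but is instantaneous given those references. Both establish the statement in full, including the edge case $n=2$, which your argument covers vacuously.
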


We continue with  the proof of Theorem \ref{thm-mult-offdiag}.

\begin{proof}[Proof of Theorem \ref{thm-mult-offdiag}]
Let $G=C_n\oplus C_{mn}$ and let $\varphi: G\rightarrow G$ be a homomorphism with$$\ker \varphi = C_m\quad\und\quad \varphi(G)= C_n\oplus C_n.$$ Since Conjecture \ref{conj-shortzs} is known to hold for $k\leq 1$ and $k=n-1$, we can assume $k\in [2,n-2]$ with $n\geq 4$.
Let $S\in \Fc(G)$ be a sequence with \be\label{hyp3}|S|=mn+n-2+k\quad\und\quad 0\notin \Sigma_{\leq mn+n-1-k}(S).\ee
Define a \emph{block decomposition} of $S$ to be a factorization $$S=W\bdot W_1\bdot\ldots \bdot W_{m-1}$$ with $1\leq |W_i|\leq n$  and $\varphi(W_i)$ zero-sum for each $i\in [1,m-1]$. Since $\mathsf s_{\leq n}(\varphi(G))=\mathsf s_{\leq n}(C_n\oplus C_n)=3n-2$ and $|S|=(m-2)n+3n-2+k\geq (m-2)n+3n-2$, it follows by repeated application of $\mathsf s_{\leq n}(\varphi(G))$ that $S$ has a block decomposition.

 \subsection*{Claim A} If $S =W\bdot W_1\bdot\ldots\bdot W_{m-1}$ is a block decomposition of $S$, then $|W_i|=n$ for all $i\in [1,m-1]$,  $|W|=2n-2+k$, $0\notin \Sigma_{\leq 2n-1-k}(\varphi(W))$,  and  $0\notin \Sigma_{\leq n-1}(\varphi(S))$. In particular, Conjecture \ref{conj-shortzs} holds for $\varphi(W)$.

\begin{proof}
Suppose $0\in \Sigma_{\leq 2n-1-k}(\varphi(W))$. Then there is a nontrivial subsequence $W_0\mid W$ with $|W_0|\leq 2n-1-k$ and $\varphi(W_0)$ zero-sum. Now $\sigma(W_0)\bdot \sigma(W_1)\bdot\ldots\bdot \sigma(W_{2m-2+k_m})$ is a sequence of $m$ terms from $\ker\varphi\cong C_m$. Since $\mathsf D(C_m)=m$, it follows that it has a nontrivial zero-sum subsequence, say $\prod_{i\in I}^\bullet \sigma(W_i)$ for some nonempty $I\subseteq [0,m-1]$. But then $\prod^\bullet_{i\in I}W_i$ is a nontrivial zero-sum subsequence of $S$ with $|{\prod}^\bullet_{i\in I}W_i|\leq (m-1)n+(2n-1-k)=mn+n-1-k$, contrary to \eqref{hyp3}. So we instead conclude that $0\notin \Sigma_{\leq 2n-1-k}(\varphi(W))$.

As a result, since $\mathsf s_{\leq 2n-1-k}(\varphi(G))=\mathsf S_{\leq 2n-1-k}(C_n\oplus C_n)=2n-1+k$, and since $|W_i|\leq n$ for all $i\in [1,m-1]$, it follows that $$2n-2+k=mn+n-2+k-(m-1)n\leq |S|-\Sum{i=1}{m-1}|W_i|=|W|\leq 2n-2+k,$$ forcing equality to hold in all estimates, i.e., $|W_i|=n$ for $i\in [1,m-1]$ and $|W|=2n-2+k$. If $0\in \Sigma_{\leq n-1}(\varphi(S))$, then we can find a nontrivial subsequence $W'_1\mid S$ with $|W'_1|\leq n-1$ and $\varphi (W'_1)$ zero-sum. Applying the argument used to show the existence of a block decomposition, we obtain a block decomposition $S=W'\bdot W'_1\bdot  \ldots\bdot W'_{m-1}$ with $|W'_1|\leq n-1$, contradicting what was just shown. Therefore $0\notin \Sigma_{\leq n-1}(\varphi(S))$. Finally, since $|W|=2n-2+k$ and $0\notin \Sigma_{\leq 2n-1-k}(\varphi(W))$ with Conjecture \ref{conj-shortzs} holding for $k$ in $C_n\oplus C_n$ by hypothesis, it follows that Conjecture \ref{conj-shortzs} holds for $\varphi(W)$, completing the claim.
\end{proof}

Suppose $$S=\wtilde W\bdot W_0\bdot W_1\bdot\ldots\bdot W_{m-1}$$ with each $\varphi(W_i)$ a nontrivial zero-sum for $i\in [0,m-1]$ and $|\wtilde W|\geq 2k-n-1$.
We call this a \emph{weak block decomposition} of $S$ with associated sequence $$S_\sigma=\sigma(W_0)\bdot \sigma(W_1)\bdot\ldots\bdot \sigma(W_{m-1})\in \Fc(\ker \varphi).$$
 Since $\ker \varphi=C_m$ and $|S_\sigma|=m=\mathsf D(C_m)$, it follows that $S_\sigma$ contains  a nontrivial zero-sum. In view of Claim A, we have $|W_i|\geq n$ for all $i\in [0,m-1]$. As a result, if $S_\sigma$ contained a proper, nontrivial zero-sum subsequence, then $S$ would have a nontrivial zero-sum of length at most $|S|-|\wtilde W|-n\leq (mn+n-2+k)-(2k-n-1)-n=mn+n-1-k$, contrary to \eqref{hyp3}. We conclude that the associated sequence $S_\sigma$ must be a minimal zero-sum of length $\mathsf D(C_m)=m$, in which case Theorem \ref{thm-cyclic-char} implies that there is some $g_0\in \ker \varphi$ with $\ord(g_0)=m$ such that
\begin{equation}\label{invcn}
\sigma(W_0) = \sigma(W_1) = \ldots = \sigma(W_{m-1}) = g_0.
\end{equation}

Now let $S=W\bdot W_1\bdot\ldots\bdot W_{m-1}$ be a fixed but otherwise arbitrary  block decomposition. In view of Claim A, we have $|W|=2n-2+k$ with $0\notin \Sigma_{\leq 2n-1-k}(\varphi(W))$ and Conjecture \ref{conj-shortzs} holding for $\varphi(W)$ using $k\in [2,n-2]$. As a result, there is a basis $(\overline e_1,\overline e_2)$ for $\varphi(G)\cong C_n\oplus C_n$ such that
 $$\varphi(W) = \overline e_1^{[n-1]} \bdot \overline e_2^{[n-1]} \bdot  (\overline e_1+\overline e_2)^{[k]},$$ and there is a subsequence $W_0\mid W$ with $$\varphi(W_0)=\overline e_1^{[n-k]}\bdot \overline e_2^{[n-k]}\bdot (\overline e_1+\overline e_2)^{[k]}.$$
Setting $\wtilde W=W\bdot W_0^{[-1]}$, so $$\varphi(\wtilde W)=\overline e_1^{[k-1]}\bdot \overline e_2^{[k-1]},$$ we find $|\wtilde W|=2k-2>2k-n-1$ (as $n\geq 2$), meaning $S=\wtilde W\bdot W_0\bdot W_1\bdot\ldots\bdot W_{m-1}$ is a weak block decomposition  with associated sequence $S_\sigma=\sigma(W_0)\bdot \sigma(W_1)\bdot\ldots\bdot \sigma(W_{m-1})$ satisfying \eqref{invcn} for some $g_0\in \ker\varphi$ with $\ord(g_0)=m$.

  \subsection*{Claim B} For any $j\in [1,m-1]$, if $W'_j\mid W\bdot W_j$ is a subsequence with $|W'_j|=n$ and $\varphi(W'_j)$ zero-sum, then $\varphi(W\bdot W_j\bdot (W'_j)^{[-1]})=\varphi(W)=\overline e_1^{[n-1]}\bdot \overline e_2^{[n-1]}\bdot (\overline e_1+\overline e_2)^{[k]}$.

  \begin{proof}
We can w.l.o.g. assume $j=1$. Setting $W'=W\bdot W_1\bdot (W'_1)^{[-1]}$, we find that $S=W'\bdot W'_1\bdot W_2\bdot \ldots\bdot W_{m-1}$ is a block decomposition, so Conjecture \ref{conj-shortzs} must hold for $\varphi(W')$ by Claim A with respect to some basis $(\overline f_1,\overline f_2)$, meaning $\varphi(W\bdot W_1\bdot (W'_1)^{[-1]})=\varphi(W')=\overline f_1^{[n-1]}\bdot \overline f_2^{[n-1]}\bdot (\overline f_1+\overline f_2)^{[k]}$. We need to show $\{\overline f_1,\overline f_2\}=\{\overline e_1,\overline e_2\}$.
Assuming by contradiction that this fails, we can w.l.o.g. assume $\overline f_1\notin \{\overline e_1,\overline e_2\}$, ensuring $f_1$ has multiplicity at least $n-1$ in $\varphi(W\bdot W_1)$.
Note, since the $n$-term zero-sum $\varphi(W_1)$ cannot contain a term with multiplicity exactly $n-1$, any term $g\notin \{\overline e_1,\overline e_2\}$ with multiplicity at least $n-1$ in $\varphi(W\bdot W_1)$ must either have  $\varphi(W_1)=g^{[n]}$ or else  $g=\overline e_1+\overline e_2$ with
$\vp_{\overline e_1+
  \overline e_2}(\varphi(W_1))\geq n-1-k\geq 1$.
In both cases, there cannot be a second term $g'\notin \{\overline e_1,\overline e_2\}$ with multiplicity at least $n-1$, the former since $\vp_{\overline e_1+\overline e_2}(W_0\bdot W_1)=k\leq n-2$, and the latter since $\varphi(W_1)$ cannot contain a term with multiplicity exactly $n-1$. As a result, we conclude that $\overline e_1$, $\overline e_2$ and $\overline f_1$ are the only terms   with multiplicity at least $n-1$ in $\varphi(W\bdot W_1)$, and thus w.l.o.g. $\overline f_2=\overline e_2$.

 Suppose $\overline f_1=\overline e_1+\overline e_2$. Then $\varphi(W')=(\overline e_1+
\overline e_2)^{[n-1]}\bdot \overline e_2^{[n-1]}\bdot (\overline e_1+2\overline e_2)^{[k]}$, and since $n\geq 4$ ensures $\overline e_1+2\overline e_2\neq \overline e_1$ with $\varphi(W'_1)$ an $n$-term zero-sum, it follows that  $\varphi(W'_1)=\overline e_1^{[n]}$. In such case, $\varphi(W_1)=(\overline e_1+\overline e_2)^{[n-1-k]}\bdot \overline e_1\bdot (\overline e_1+2\overline e_2)^{[k]}$, which is only zero-sum for $k=1$, contradicting that $k\in [2,n-2]$. So we conclude that $\overline f_1\neq \overline e_1+\overline e_2$, in which case we must instead have $\varphi(W_1)=\overline f_1^{[n]}$.

In this case, $\varphi(W')=\overline f_1^{[n-1]}\bdot\overline  e_2^{[n-1]}\bdot (\overline f_1+\overline e_2)^{[k]}$ with $\overline f_1+\overline e_2\in \{\overline e_1,\overline e_1+\overline e_2\}$. Thus either $\overline f_1=\overline e_1-\overline e_2$ or $\overline e_1$. Since $\overline f_1\neq \overline e_1$, this means $\overline f_1=\overline e_1-\overline e_2\neq \overline e_1+\overline e_2$ (as $n\geq 3$), $\varphi(W')=\overline f_1^{[n-1]}\bdot\overline  e_2^{[n-1]}\bdot
\overline e_1^{[k]}=(\overline e_1-\overline e_2)^{[n-1]}\bdot\overline  e_2^{[n-1]}\bdot
\overline e_1^{[k]}$ and $\varphi(W'_1)=\overline e_1^{[n-1-k]}\bdot (\overline e_1+\overline e_2)^{[k]}\bdot \overline f_1$. Hence,  since $\varphi(W'_1)$ is an $n$-term zero-sum, it follows that $\overline e_1-\overline e_2=\overline f_1= \overline e_1-k\overline e_2$, contradicting that $k\in [2,n-2]$, which completes the claim.
  \end{proof}

\subsection*{Claim C}
$\varphi(W_j)  \in \{\overline e_1^{[n]}, \overline e_2^{[n]}, (\overline e_1 + \overline e_2)^{[n]}\}$ for every $j\in [1,m-1]$.

\begin{proof}
Since each $\varphi(W_j)$, for $j\in [1,m-1]$, is a zero-sum of length $n$ by Claim A, it suffices to show $\supp(\varphi(W_1\bdot\ldots\bdot W_{m-1}))\subseteq \{\overline e_1,\overline e_2,\overline e_1+\overline e_2\}$. Let $g\in \supp(W_j)$ and $j\in [1,m-1]$ be arbitrary.  Since $0\notin \Sigma_{\leq n-1}(\varphi(S))$ by Claim A and $|W\bdot W_j\bdot g^{[-1]}|=3n-3+k\geq 3n-2=\mathsf s_{\leq n}(C_n\oplus C_n)$, there is subsequence $W'_j\mid W\bdot W_j\bdot g^{[-1]}$ with $|W'_j|=n$, $\varphi(W'_1)$ zero-sum  and  $g\in \supp(W\bdot W_j\bdot (W'_j)^{[-1]})$. Thus Claim B ensures that $\varphi(g)\in\{\overline e_1,\overline e_2,\overline e_1+\overline e_2\}$, and as $g\in \supp(W_j)$ and $j\in [1,m-1]$ were arbitrary, the claim follows.
\end{proof}

\subsection*{Claim D} There are $g_1,\,g_2\in G$ with $\varphi(g_1)=\overline e_1$, $\varphi(g_2)=\overline e_2$ and   $\supp(S)=\{g_1,g_2,g_1+g_2\}$.

\begin{proof}
Let $x\in \supp(\wtilde W)$ and $y\in \supp(W_j)$, for some $j\in [0,m-1]$, be arbitrary terms with $\varphi(x)=\varphi(y)=\overline e_1$, which exist as $k\geq 2$ and $\vp_{\overline e_1}(\varphi(W_0))=n-k\geq 1$. If we set $\wtilde W'=\wtilde W\bdot x^{[-1]}\bdot y$ and $W'_j=W_j\bdot y^{[-1]}\bdot x$, we find that $S=\wtilde W'\bdot W_1\bdot\ldots\bdot W_{j-1}\bdot W'_j\bdot W_{j+1}\bdot \ldots\bdot W_{m-1}$ is a weak product decomposition with associated sequence $S_\sigma \bdot \sigma(W_j)^{[-1]}\bdot \sigma(W'_j)=g_0^{[m-1]}\bdot \sigma(W'_j)$.
As a result, Theorem \ref{thm-cyclic-char} implies that $g_0-y+x=\sigma(W_j)-y+x=\sigma(W'_j)=g_0$, whence $x=y$. This shows that all terms $x\in \supp(S)$ with $\varphi(x)=\overline e_1$ are equal to the same element (say) $g_1\in G$.
The same argument shows that all terms $x\in \supp(S)$ with $\varphi(x)=\overline e_2$ are equal to the same element (say) $g_2\in G$. Let $z\in \supp(W_0\bdot \ldots \bdot W_{m-1})$ be an arbitrary term with $\varphi(z)=\overline e_1+\overline e_2$, which exists as $\vp_{\overline e_1+\overline e_2}(\varphi(W_0))=k\geq 2$. Let $g\in \supp(W_j)$ with  $j\in [0,m-1]$. Since $k\geq 2$, we have $g_1\bdot g_2\mid \wtilde W$. If we set $\wtilde W'=\wtilde W\bdot g_1^{[-1]}\bdot g_2^{[-1]}\bdot z$ and $W'_j=W_j\bdot z^{[-1]}\bdot g_1\bdot g_2$, we find that $S=\wtilde W'\bdot W_1\bdot\ldots\bdot W_{j-1}\bdot W'_j\bdot W_{j+1}\bdot \ldots\bdot W_{m-1}$ is a weak product decomposition with associated sequence $S_\sigma \bdot \sigma(W_j)^{[-1]}\bdot \sigma(W'_j)=g_0^{[m-1]}\bdot \sigma(W'_j)$. As a result, Theorem \ref{thm-cyclic-char} implies that $g_0-z+g_1+g_2=\sigma(W_j)-z+g_1+g_2=\sigma(W'_j)=g_0$, whence $z=g_1+g_2$. This shows that all terms $x\in \supp(W_0\bdot \ldots\bdot W_{m-1})$ with $\varphi(z)=\overline e_2$ are equal to the same element $g_1+g_2\in G$. Since, by definition of $\wtilde W$, there are no terms $z\in \supp(\wtilde W)$ with $\varphi(z)=\overline e_1+\overline e_2$, the claim follows.
\end{proof}

In view of Claim D and \eqref{invcn}, we find that \be\label{g-help}g_0=\sigma(W_0)=(n-k)g_1+(n-k)g_2+k(g_1+g_2)
=ng_1+ng_2.\ee
Since $\varphi(g_1+g_2)=\overline e_1+\overline e_2$ has order $n$, it follows that $n$ divides  $\ord(g_1+g_2)$, which, combined with $n(g_1+g_2)=g_0$ having $\ord(g_0)=m$, forces $\ord(g_1+g_2)=mn$.
If $\la g_1,g_2\ra=G'$ were a proper subgroup of $G=C_n\oplus C_{mn}$, then  $\ord(g_1+g_2)=mn$ ensures that $\la g_1,g_2\ra=C_{n'}\oplus C_{mn}$ for some proper divisor $n'\mid  n$. In such case, $S$ would be a sequence of terms from $G'=C_{n'}\oplus C_{mn}$ with length $|S|=(m+1)n-2+k\geq mn+2n'-2+k\geq \eta(G')$, ensuring that $S$ has a zero-sum of length at most $mn$, contrary to \eqref{hyp3}. Therefore $\la g_1,g_2\ra=G$, meaning $\{g_1,g_2\}$ is a generating set for $G$.

If there are $i,\,j\in [1,m-1]$ with $\varphi(W_i)=\overline e_1^{[n]}$ and $\varphi(W_j)=\overline e_2^{[n]}$, then Claim D ensures that $W_i=g_1^{[n]}$ and $W_j=g_2^{[n]}$. Combined with \eqref{invcn} and \eqref{g-help}, this  implies $ng_1+ng_2=g_0=\sigma(W_i)=ng_1$ and $ng_1+ng_2=g_0=\sigma(W_j)=ng_2$, whence $ng_1=ng_2=0$ and $g_0=ng_1+ng_2=0$, contradicting that $\ord(g_0)=m\geq 2$. Therefore we can w.l.o.g. assume $$W_1, W_2,\ldots, W_{m-1}\in \{ g_2^{[n]}, (g_1+g_2)^{[n]} \}.$$
If $W_1=\ldots=W_{m-1}=(g_1+g_2)^{[n]}$, then Conjecture \ref{conj-shortzs}.3(b) holds. So we can w.l.o.g assume $W_1=g_2^{[n]}$, in which case \eqref{invcn} and \eqref{g-help} yield $ng_1+ng_2=g_0=\sigma(W_1)=ng_2$, implying $$ng_1=0.$$

Let $s-1\in [1,m-1]$ be the number of $i\in [1,m-1]$ with $W_i=g_2^{[n]}$. Then
$$S=g_1^{[n-1]}\bdot  g_2^{[sn-1]}\bdot  (g_1+g_2)^{[(m-s)n+k]}.$$ Since $ng_1=0$, we have $\ord(g_1)\leq n$. Thus  $$n^2m=|G|=|\la g_1,g_2\ra|=\frac{|\la g_1\ra|\cdot |\la g_2\ra|}{|\la g_1\ra\cap \la g_2\ra|}\leq n(mn),$$ forcing  equality to  hold in all above estimates. As a result, $\ord(g_1)=n$, $\ord(g_2)=mn$ and $\la g_1\ra\cap \la g_2\ra=\{0\}$, meaning  $(g_1,g_2)$ is a basis for $G$, and now Conjecture \ref{conj-shortzs}.3(a) holds, completing the proof.
\end{proof}

To conclude the paper, we note that Corollaries \ref{cor-mult-2,3}, \ref{cor-mult} and \ref{cor-bigspec} following immediately by  combining Theorem \ref{thm-mult-offdiag} with the respective result from \cite[Corollaries 1.3, 1.4 and 1.5]{chaoI}.


\begin{thebibliography}{99}

\bibitem{propB-app4} P. Baginski, A. Geroldinger, D. J.  Grynkiewicz and A. Philipp,  Products of two atoms in Krull monoids and arithmetical characterizations of class groups, \emph{European J. Combin.} 34 (2013), no. 8, 1244--1268.

\bibitem{PropB-smallcase}  G. Bhowmik, I.  Halupczok, J. C.  Schlage-Puchta,  The structure of maximal zero-sum free sequences, \emph{Acta Arith.} 143 (2010), no. 1, 21--50.


\bibitem{Cohen}  G. Cohen and  G. Zemor, Subset sums and coding theory, Ast\'erisque 258 (1999) 327--339.

\bibitem{S_<k-invariant-origin-delorme-ordaz} C. Delorme, O. Ordaz and D. Quiroz, Some Remarks on Davenport Constant, \emph{Discrete Math.} 237 (2001), 119--128.

\bibitem{emde-propC-prime}
P. van Emde Boas, A combinatorial problem on finite abelian
groups II, \emph{Math. Centrum Amsterdam Afd. Zuivere Wisk.},
1969(ZW-007):60pp., 1969.


\bibitem{Boas-K-rk2-Dav} P. van Emde Boas and D.  Kruyswijk,  A combinatorial problem on finite Abelian groups, \emph{Math. Centrum Amsterdam Afd. Zuivere Wisk.} 1967 (1967), ZW-009, 27 pp.

\bibitem{freeze}  M. Freeze and W. Schmid, Remarks on a generalization of the Davenport constant, \emph{Discrete Math.} 310 (2010), 3373--3389.

\bibitem{propBGaoGer-multof2} W. Gao and A. Geroldinger, On zero-sum sequences in $\Z/n\Z\oplus \Z/n\Z$, \emph{Integers} 3
(2003), \#A8.

\bibitem{PropB->PropC}
W. Gao and A. Geroldinger, On Long Minimal Zero Sequences in Finite Abelian Groups, \emph{Periodica Mathematica Hungarica} 38 (1999), 179--211.

\bibitem{PropB} W. Gao, A. Geroldinger, D. J. Grynkiewicz, Inverse Zero-Sum Problems III, \emph{Acta Arithmetica} 141 (2010), no. 3, 103--152.


\bibitem{Gao-liu} W. Gao, Y. Li, C. Liu and Y. Qu, Product-one subsequences over subgroups of a finite group, \emph{Acta Arithmetica} 189 (2019), 209--221.

\bibitem{PropB-app6}  A. Geroldinger, D. J.  Grynkiewicz and P.  Yuan,  On products of $k$ atoms II, \emph{Mosc. J. Comb. Number Theory} 5 (2015), no. 3, 3--59.



\bibitem{Cong-zs}  A. Geroldinger, D. J.  Grynkiewicz and W. A Schmid,  Zero-sum problems with congruence conditions, \emph{Acta Math. Hungar.} 131 (2011), no. 4, 323--345.


\bibitem{Alfredbook} A. Geroldinger and F. Halter-Koch, \emph{Non-unique Factorizations: Algebraic, Combinatorial and Analytic Theory}, Pure and Applied Mathematics, vol. 278, Chapman
and Hall/CRC, 2006.


\bibitem{Ger-Ruzsa-book} A. Geroldinger and I. Ruzsa,
\emph{Combinatorial number theory and additive group theory},
Courses and seminars from the DocCourse in Combinatorics and Geometry held in Barcelona, 2008. Advanced Courses in Mathematics. CRM Barcelona. Birkh\"auser Verlag, Basel, 2009. xii+330 pp.


\bibitem{PropB-app7}  A. Geroldinger and W. A. Schmid,  A characterization of class groups via sets of lengths, \emph{J. Korean Math. Soc.} 56 (2019), no. 4, 869--915.

\bibitem{PropB-app2}  B. Girard,  On the existence of zero-sum subsequences of distinct lengths, \emph{Rocky Mountain J. Math.} 42 (2012), no. 2, 583--596.


\bibitem{PartitionThm-nat} D. J. Grynkiewicz, Representing Sequence Subsums as Sumsets of Near Equal Sized Sets, in \emph{Combinatorial Number Theory IV}, ed. M. Nathanson, Springer (2021), Ch. 11.

\bibitem{Gbook} D. Grynkiewicz, \emph{Structural Additive Theory}, Developments in Mathematics 30 (2013),
Springer.

\bibitem{hamconj} D. J. Grynkiewicz, On a conjecture of Hamidoune for subsequence sums, \emph{Integers} 5(2) (2005), Paper A07, 11p.

\bibitem{chaoI} D. J. Grynkiewicz and Chao Liu, A Multplicative Property for Zero-Sums I, preprint.

\bibitem{uzi} D. J. Grynkiewicz and U. Vishne, The Index of Small Length Sequences, \emph{International J. Algebra and Computation}
30 (2020), no. 5, 977--1014.





\bibitem{PropBfix} D. J. Grynkiewicz, Inverse Zero-Sum Problems III: Addendum, preprint, https://arxiv.org/abs/2107.10619.


\bibitem{S_<k-invariant2/3p}
 D. J. Grynkiewicz, Chunlin Wang and  Kevin Zhao, The structure of a sequence with prescribed zero-sum subsequences, \emph{Integers} 20 (2020), Paper No. A3, 31 pp.



\bibitem{olson-pgroup} J. E. Olson,  A combinatorial problem on finite Abelian groups I.
\emph{J. Number Theory} 1 (1969), 8--10.

\bibitem{olson-rk2} J. E. Olson,
A combinatorial problem on finite Abelian groups II.
\emph{J. Number Theory} 1 (1969), 195--199.


\bibitem{propB-app1}  O. Ordaz, A. Philipp, I. Santos and  W. A. Schmid,  On the Olson and the strong Davenport constants, \emph{J. Th\'eor. Nombres Bordeaux} 23 (2011), no. 3, 715--750.





\bibitem{PropB-app8}  J. Peng, Y. Qu and  Y. Li,  Inverse problems associated with subsequence sums in $C_p\oplus C_p$, \emph{Front. Math. China} 15 (2020), no. 5, 985--1000.

\bibitem{Reiher-propB} C. Reiher, \emph{A proof of the theorem according to which every prime number possesses
Property B}, Ph.D Dissertation, University of Rostock, 2010.


\bibitem{Roy-s<k} B. Roy and R. Thangadurai, On zero-sum subsequences in a finite abelian $p$-group of length not exceeding a given number, \emph{Journal of Number Theory} 191 (2018), 246--257.

\bibitem{schmid-propC-exact} W. A. Schmid, Restricted inverse zero-sum problems in groups of rank 2,
\emph{Quarterly journal of mathematics} 63 (2012), no. 2, 477--487.

\bibitem{Schmid-propB}  W. A. Schmid,  Inverse zero-sum problems II, \emph{Acta Arith.} 143 (2010), no. 4, 333--343.


\bibitem{kevin-S_<k-invariant} C. Wang and K. Zhao, On zero-sum subsequences of length not exceeding a given
number, \emph{J. Number Theory} 176 (2017), 365--374






\end{thebibliography}
\end{document}